\newcommand{\arxiv}[1]{\href{http://www.arXiv.org/abs/#1}{arXiv:#1}}
\newcommand {\junk}[1]{}
\newtheorem{maintheorem}{Main Theorem}
\newtheorem{theorem}{Theorem}[section]
\newtheorem{lemma}[theorem]{Lemma}
\newtheorem{proposition}[theorem]{Proposition}
\newtheorem{corollary}[theorem]{Corollary}
\theoremstyle{definition}
\newtheorem{definition}[theorem]{Definition}
\newtheorem{remark}[theorem]{Remark}
\numberwithin{equation}{section}
\def\crit{{\mathcal G}^c}
\def\R{{\mathbb R}}
\def\bool{{\mathbb B}}
\def\Rp{\R_+}
\def\Rpn{\Rp^n}
\def\Rpnn{\Rp^{n\times n}}
\def\cycle{C}
\def\digr{{\mathcal G}}
\def\ind{T}
\def\wiel{\operatorname{Wi}}
\title{Generalizations of Bounds on the Index of Convergence to Weighted
Digraphs}
\author{
Glenn Merlet\footnote{Universit\'e d'Aix-Marseille, CNRS, IML, 13009
MARSEILLE, France. Email: {\tt glenn.merlet@gmail.com}}
\and
Thomas Nowak\footnote{Laboratoire d'Informatique, \'Ecole polytechnique,
France. Email: {\tt nowak@lix.polytechnique.fr}}
\and
Hans Schneider\footnote{University of Wisconsin-Madison, USA. Email: {\tt
hans@math.wisc.edu}. Partially supported by EPSRC grant RRAH15735.}
\and
Serge\u{\i} Sergeev\footnote{University of Birmingham, School of Mathematics,
UK. Email: {\tt sergiej@gmail.com}. Partially supported by EPSRC grant
RRAH15735, RFBR grant 12-01-00886, and RFBR-CNRS grant 11-0193106.}
}
\date{}
\begin{document}

\maketitle

\begin{abstract}
We study sequences of optimal walks of a growing length, in
weighted digraphs, or equivalently, sequences of
entries of max-algebraic matrix powers with growing exponents. It
is known that these sequences are eventually periodic when the digraphs are strongly connected.
The transient of such periodicity depends, in
general, both on the size of digraph and on the magnitude of the
weights. In this paper, we show that some bounds on the indices of
periodicity of (unweighted) digraphs, such as the bounds of
Wielandt, Dulmage-Mendelsohn, Schwarz, Kim and Gregory-Kirkland-Pullman, apply to the weights
of optimal walks when one of their ends is a critical node.

\medskip

\noindent
{\bf Keywords.}
maximum walks; max algebra; nonnegative matrices; matrix
powers; index of convergence; weighted digraphs

\medskip

\noindent
{\bf MSC 2010 Classification.}
15A80; 15B48; 15A27; 15A21
\end{abstract}

\section{Introduction}

We show that six known bounds for the index of convergence (transient of periodicity) of an unweighted  digraph also apply to weighted digraphs, namely, to transients of rows and columns
with critical indices.

The origin of the first of these known bounds lies in Wielandt's
well-known paper \cite{Wie-50} where an upper bound for the transient
of a primitive nonnegative matrix was asserted without
proof\footnote{Wielandt's proof was published later in
\cite{Sch-02}.}. Dulmage and Mendelsohn \cite{DM-62}  provided a
proof of this result by interpreting it in terms of digraphs and
they sharpened the result by using as additional information in the
hypotheses the length of the smallest cycle of the
digraph\footnote{Denardo~\cite{Den-77} later rediscovered their
result.}.
 Schwarz \cite{Sch-70} generalized Wielandt's
  result to apply to all strongly connected digraphs  by using Wielandt's bound for the cyclicity
classes of the digraph, see also Shao and Li~\cite{SL-87}.
Kim's \cite{Kim-79} bound encompasses the first three and can be proved using Dulmage
and Mendelsohn's bound in the cyclicity classes.

We also generalize another bound by Kim~\cite{Kim-79}, and a bound
by Gregory-Kirkland-Pullman~\cite{GKP-95} which depend on Boolean
rank.

The six bounds mentioned above are stated in Theorem~\ref{t:allbounds} and Theorem~\ref{t:rank:bool}
after the requisite definitions.
Our generalizations to weighted digraphs are stated in Main Theorem~\ref{t:mainres}
and Main Theorem~\ref{t:rank}, and subsequently
proved in Sections~\ref{s:DM}--\ref{s:rank}.


 We exploit the natural connection between weighted digraphs and nonnegative
matrices in the max (-times) algebra
 just as the bounds that we take for our starting points connect unweighted digraphs and Boolean matrices.

\section{Preliminaries and Statement of Results}

\subsection{Digraphs, Walks, and Transients}
\label{ss:digraphs}
Let us start with some definitions.
\begin{definition}[Digraphs]
\label{d:digr}
A {\em digraph} is a pair $\digr=(N,E)$ where
$N$ is a set of nodes and $E\subseteq N\times N$ is
a set of edges. 
\end{definition}
\begin{definition}[Walks and cycles]
\label{d:walk}
A {\em walk} in $\digr$ is a finite sequence $W=(i_0,i_1,\ldots,i_t)$ of
nodes such that
each pair $(i_0,i_1),(i_1,i_2),\ldots,(i_{t-1},i_t)$ is an edge of $\digr$ (that is, belongs to $E$).
Here, the nodes $i_0$, resp.~$i_t$ are the {\em start} resp.\ the {\em end} nodes of
the walk. 

The number $t$ is
the {\em length} of the walk, and we denote it by~$\ell(W)$. 

When $i_0=i_t$, the walk is
{\em closed}. If, in a closed walk, none of the
nodes except for the start and the end appear more than once, the walk is called a {\em cycle}.
If no node appears more than once, then the walk is called a {\em path}.
A walk is {\em empty\/} if its length is~$0$.
\end{definition}

To a digraph $\digr=(N,E)$ with $N=\{1,\ldots,n\}$, we can associate a
Boolean matrix $A=(a_{i,j})\in\bool^{n\times n}$ defined by
\begin{equation}
\label{e:sij}
a_{i,j}=
\begin{cases}
0 &\text{if $(i,j)\notin E$}\\
1 &\text{if $(i,j)\in E$}
\enspace.
\end{cases}
\end{equation}
Conversely, one can associate a digraph to every square Boolean matrix.
The connectivity in $\digr$ is closely related to the Boolean matrix powers of
$A$. By the {\em Boolean algebra\/} we mean the set $\bool=\{0,1\}$ equipped with the logical operations
of conjunction $a\cdot b$ and disjunction $a\oplus b=\max(a,b)$,
for $a,b\in\bool$. The Boolean multiplication of two matrices
$A\in\bool^{m\times n}$ and
$B\in\bool^{n\times q}$
is defined by $(A\otimes B)_{i,j}:=\bigoplus_{k=1}^n (a_{i,k}\cdot b_{k,j})$, and then we also have
Boolean matrix powers $A^{\otimes t}:=\underbrace{A\otimes\ldots\otimes
A}_{t\text{ times}}$.
The $(i,j)$\textsuperscript{th} entry of
$A^{\otimes t}$ is denoted by~$a_{i,j}^{(t)}$.

The relation between Boolean powers of~$A$ and connectivity in~$\digr$ is
based on the following fact: $a_{i,j}^{(t)}=1$ if and only if $\digr$ contains
a walk of length $t$
from $i$ to $j$.

Let $\digr$ be a digraph with associated matrix~$A\in\bool^{n\times n}$.
The sequence of Boolean matrix powers
$A^{\otimes t}$ is eventually periodic, that is, there exists a positive
$p$
such that
\begin{equation}\label{e:periods}
A^{\otimes (t+p)}=A^{\otimes t}
\end{equation}
for all $t$ large enough. 

\begin{definition}[Eventual period]
\label{d:period}
Each $p$ satisfying~\eqref{e:periods} is called an {\em eventual
period\/} of~$A$. 
\end{definition}

The set of nonnegative~$t$ satisfying~\eqref{e:periods} is
the same for all eventual periods~$p$. 

\begin{definition}[Transient of digraphs]
The least nonnegative number $t$ satisfying~\eqref{e:periods} for some 
(and hence for all) $p$ is called the {\em transient (of periodicity) 
of $\digr$}; we denote it by
$\ind(\digr)$. 
\end{definition}

See~\cite{BR} for general introduction to the theory
of digraphs and~\cite{LS-93} for a survey on their
transients. 
In the literature, $\ind(\digr)$ is often
called the {\em index of convergence}, or the {\em exponent\/} of $\digr$. 

\begin{definition}[Powers of digraphs]
The digraph associated with $A^{\otimes t}$ is denoted by $\digr^t$. Such graphs
will be further referred to as the {\em powers} of $\digr$.
\end{definition}

\begin{definition}[Cyclicity and primitivity]
For a strongly connected digraph~$\digr$, its {\em
cyclicity\/} is the greatest common divisor of the lengths of all cycles of~$\digr$. If $d=1$, then~$\digr$ is called
{\em primitive}, otherwise it is called {\em imprimitive}.
\end{definition}

The cyclicity $d$ of~$\digr$ can be equivalently defined as
the least eventual period~$p$ in~\eqref{e:periods}. 
\if{
If~$\digr$ is primitive, i.e. if $d=1$, the transient of $\digr$
convergence is
also called the {\em exponent\/} and it is equal to the
smallest nonnegative~$t$ such that all entries of $A^{\otimes t}$ are equal
to~$1$.
}\fi
If~$\digr$ is strongly connected, then its cyclicity is the smallest eventual
period of its sequence of powers.
Let us recall the following basic observation from~\cite{BR}.

\begin{theorem}[{\cite[Theorem 3.4.5]{BR}}]
\label{t:graphpowers} Let $\digr$ be a strongly connected graph with
cyclicity $d$. For each $k\geq 1$, graph $\digr^k$ consists of
$\gcd(k,d)$ isolated strongly connected components, and every
component has cyclicity~$d/\gcd(k,d)$.
\end{theorem}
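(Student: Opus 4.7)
The plan is to exploit the standard partition of a strongly connected digraph of cyclicity $d$ into cyclicity classes. Write $N = C_0 \sqcup C_1 \sqcup \cdots \sqcup C_{d-1}$, where every edge of $\digr$ goes from some $C_i$ to $C_{i+1 \bmod d}$. This partition exists by the standard Frobenius/Romanovsky theory for strongly connected digraphs, and I would cite or briefly recall it. An immediate consequence is that a walk of length $k$ starting in $C_i$ ends in $C_{i+k \bmod d}$, so in $\digr^k$ every edge goes from $C_i$ to $C_{i+k \bmod d}$.

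Next I would analyze reachability in $\digr^k$. Starting from a node in $C_i$, the nodes reachable in $\digr^k$ lie in $\bigcup_{m\ge 1} C_{i+mk \bmod d}$, i.e. the coset of the subgroup $\langle k\rangle \subseteq \mathbb{Z}/d\mathbb{Z}$ containing $i$. This subgroup has order $d/\gcd(k,d)$, so there are exactly $\gcd(k,d)$ such cosets, and hence at most $\gcd(k,d)$ strongly connected components of $\digr^k$, each being a union of $d/\gcd(k,d)$ cyclicity classes of $\digr$.

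To confirm that each such union is indeed a single strongly connected component of $\digr^k$, I would use the following familiar fact about strongly connected digraphs of cyclicity $d$: between any $u \in C_a$ and $v \in C_b$ there exist walks of every sufficiently large length $\ell$ with $\ell \equiv b-a \pmod d$. Given $u,v$ in the same coset, so that $b-a \equiv mk \pmod d$ for some $m$, I would then choose $\ell$ of the form $mk + td$ with $t$ a large multiple of $k/\gcd(k,d)$, which makes $k \mid \ell$. This $\ell$ yields a walk of length $\ell/k$ from $u$ to $v$ in $\digr^k$, establishing strong connectivity within the component.

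Finally, for the cyclicity statement I would fix a node $v$ in a component and compute the gcd of lengths of closed walks through $v$ in $\digr^k$. These correspond precisely to closed walks through $v$ in $\digr$ whose length is divisible by $k$. Since for $v\in \digr$ the lengths of closed walks through $v$ contain every sufficiently large multiple of $d$, the divisibility condition $k\mid \ell d$ on such $\ell$ reduces to $(d/\gcd(k,d)) \mid \ell$, and every sufficiently large multiple of $d/\gcd(k,d)$ is realized. Hence the gcd is exactly $d/\gcd(k,d)$, as claimed. The main obstacle I anticipate is being careful in this last gcd computation, and in citing cleanly the "every sufficiently large admissible length is realized" fact that underlies both the strong-connectivity step and the cyclicity step; the coset-counting and reachability arguments are then routine.
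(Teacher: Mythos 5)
The paper does not prove this statement at all --- it is quoted directly from Brualdi and Ryser \cite[Theorem 3.4.5]{BR} --- so there is no internal proof to compare against; your argument is the standard cyclicity-class proof and is sound in outline. The coset analysis correctly locates each component of $\digr^k$ inside a union of $d/\gcd(k,d)$ cyclicity classes (and, since every edge of $\digr^k$ stays inside a coset, gives isolation), the ``every sufficiently large admissible length is realized'' fact correctly yields strong connectivity of each such union, and the closed-walk computation gives the cyclicity. Two points to repair. First, a logical-ordering quibble: the reachability argument by itself only shows that each component is \emph{contained in} a coset, hence that there are \emph{at least} $\gcd(k,d)$ components; the ``at most'' direction is exactly what your strong-connectivity step supplies, so the two claims should be stated in that order. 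Second, and more substantively, the reduction in your last step is wrong as written: for a closed walk of length $\ell d$ through $v$, the condition $k\mid \ell d$ is equivalent to $\bigl(k/\gcd(k,d)\bigr)\mid\ell$, not to $\bigl(d/\gcd(k,d)\bigr)\mid\ell$ (write $k=gk'$, $d=gd'$ with $g=\gcd(k,d)$ and $\gcd(k',d')=1$; then $k\mid\ell d$ iff $k'\mid \ell d'$ iff $k'\mid\ell$). Fortunately the conclusion survives the correction: taking $\ell=s\,k/\gcd(k,d)$, the induced closed walk in $\digr^k$ has length $\ell d/k=s\,d/\gcd(k,d)$, so the realized lengths through $v$ in $\digr^k$ are exactly certain multiples of $d/\gcd(k,d)$, all sufficiently large such multiples occur, and the gcd is $d/\gcd(k,d)$ as claimed. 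With that fix the proof is complete.
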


We have an important special case when $k=d$.
\begin{definition}[Cyclicity classes]
\label{d:cyclasses}
According to Theorem~\ref{t:graphpowers}, $\digr^d$ has exactly~$d$ strongly
connected components, each of cyclicity~$1$.
The node sets of these components are called the {\em cyclicity classes\/}
of~$\digr$.
\end{definition}

In terms of walks, nodes $i$ and $j$ belong to the same cyclicity class if and only if there is a
walk from $i$ to $j$ whose length is a multiple of $d$. More generally, for each $i$ and $j$ there is
an integer~$s$  with $0\leq s\leq d-1$ such that the length of every walk connecting $i$ to $j$ is congruent
to $s$ modulo $d$. This observation defines the {\em circuit of cyclicity classes}, being
crucial for the description of $\digr^ t$ in the periodic regime.

Let us introduce the following 
two definitions. The first of them is given
in honor of the first paper on the subject by Wielandt~\cite{Wie-50}.

\begin{definition}[Wielandt number]
By {\em Wielandt number} we mean the following function:
\begin{equation}
\label{wienumber}
\wiel(k):=
\begin{cases}
0 & \text{if $k=1$,}\\
(k-1)^2+1 & \text{if $k>1$.}
\end{cases}
\end{equation}
\end{definition}

\begin{definition}[Girth]
\label{d:girth}
The {\em girth\/} of $\digr$, denoted it by $g(\digr)$,
is the smallest length of a nonempty cycle in $\digr$.
\end{definition}

We will be interested in the following bounds on $\ind(\digr)$. 

\begin{theorem}
\label{t:allbounds}
Let $\digr$ be a strongly connected digraph with~$n$ nodes, cyclicity~$d$, and
girth~$g$.
The following upper bounds on the transient of~$\digr$ hold:
\begin{itemize}
\item[{\rm (i)}] (Wielandt~\cite{Wie-50,Sch-02}) If $d=1$, then
$\ind(\digr)\leq\wiel(n)$;
\item[{\rm (ii)}] (Dulmage-Mendelsohn~\cite{DM-62})
If $d=1$, then $\ind(\digr)\leq (n-2)\cdot g+n$;
\item[{\rm (iii)}] (Schwarz~\cite{Sch-70,SL-87}) $\displaystyle\ind(\digr)\leq
d\cdot\wiel\left(\left\lfloor\frac{n}{d}\right\rfloor\right)
+(n \bmod d)$;
\item[{\rm (iv)}]  (Kim~\cite{Kim-79}) $\displaystyle\ind(\digr)\leq
\left(\left\lfloor\frac{n}{d}\right\rfloor-2\right)\cdot g + n$.
\end{itemize}
\end{theorem}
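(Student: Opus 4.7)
The plan is to prove the four inequalities in a cascade, with the Dulmage--Mendelsohn bound (ii) as the technical core and the others following by short reductions. Specifically, (i) follows from (ii) by bounding the girth, while (iii) and (iv) reduce to (i) and (ii) respectively by passing to the primitive components of $\digr^d$ guaranteed by Theorem~\ref{t:graphpowers}.

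For (ii), fix nodes $i,j$ and a cycle $C_0$ realising the girth $g$. Since $\digr$ is strongly connected and primitive, an accretion argument produces additional cycles $C_1,\ldots,C_s$ such that each $C_k$ shares at least one node with $C_0\cup\cdots\cup C_{k-1}$, the lengths $g,\ell(C_1),\ldots,\ell(C_s)$ together have $\gcd$ equal to $1$, and $|C_0\cup\cdots\cup C_s|\leq n$. A Frobenius-type estimate then shows that every residue class modulo $g$ is representable as a short non-negative integer combination of $\ell(C_1),\ldots,\ell(C_s)$; once such a residue has been achieved at a base node on $C_0$, any additional multiple of $g$ can be added by looping around $C_0$. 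Gluing this detour construction to direct paths of length at most $n-1$ from $i$ to the base node and from the base node to $j$ produces walks of every length $t\geq (n-2)g+n$ from $i$ to $j$.

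Bound (i) then follows because in a primitive digraph on $n\geq 2$ nodes we necessarily have $g\leq n-1$: otherwise $g=n$ would make every cycle a Hamilton cycle and force the cyclicity to be $n$, contradicting $d=1$. Substitution into (ii) gives $(n-2)(n-1)+n=(n-1)^2+1=\wiel(n)$. For (iii) and (iv), Theorem~\ref{t:graphpowers} splits $\digr^d$ into $d$ primitive components of sizes $n_1,\ldots,n_d$ with $\sum n_k=n$, each having girth $g/d$. The block-diagonal structure of $A^{\otimes d}$ together with the circuit of cyclicity classes allows the transient of $\digr$ to be controlled by $d$ times the maximum transient of a component plus a small correction for walks whose length is not a multiple of $d$. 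Feeding (i) component-wise yields (iii); feeding (ii) with component girth $g/d$ yields (iv), once one checks that the admissible size partitions $(n_1,\ldots,n_d)$ sharpen the worst-case maximum to $\wiel(\lfloor n/d\rfloor)$ and $(\lfloor n/d\rfloor-2)g$ respectively, with the residue $n\bmod d$ absorbing the cross-class overhead.

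The hard part is Step~1: executing the cycle accretion with the vertex overhead controlled so tightly that the leading coefficient in (ii) is $(n-2)g$ rather than something softer like $(n-1)g$, and arranging the Frobenius estimate so that the additive constant is exactly $n$. This tight bookkeeping is what separates the Dulmage--Mendelsohn bound from its weaker variants and is precisely what the subsequent cascade leverages into all four inequalities.
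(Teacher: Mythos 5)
Your cascade structure (Dulmage--Mendelsohn as the core, Wielandt by the observation that $g\le n-1$ when $d=1$ and $n\ge 2$, Schwarz and Kim via the components of $\digr^d$) is a legitimate skeleton, but the proposal has two genuine gaps. First, the core of (ii) is not actually proved: you explicitly defer ``the hard part,'' namely getting the leading coefficient $(n-2)g$ and the additive constant $n$ out of the accretion/Frobenius argument. This is not routine bookkeeping. As described --- detours accumulated at a single base node on $C_0$, glued to paths of length at most $n-1$ from $i$ to the base and from the base to $j$ --- the construction gives walks of length up to roughly $2(n-1)+(g-1)\cdot n$ before padding becomes possible, which for $g=n-1$ exceeds $(n-2)(n-1)+n$; so some residue classes would only be realized above the claimed threshold. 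Recall that for a primitive digraph the transient is the least $t$ with $A^{\otimes t}$ equal to the all-ones matrix, so you must produce walks of \emph{every} length $\ge (n-2)g+n$ between \emph{every} pair. The tight argument (which the paper uses, in weighted form, as Lemma~\ref{lem:nacht} plus Lemma~\ref{lem:trans:walk}) avoids Frobenius counting altogether: a node $k$ on the girth cycle has a loop in $\digr^g$, so the set of nodes reachable from $k$ in exactly $tg$ steps is nondecreasing in $t$ and stabilizes by $t=n-1$, giving $T_k\le (n-1)g$; any other node reaches the girth cycle by a path of length at most $n-g$, whence $(n-1)g+(n-g)=(n-2)g+n$.

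Second, your reduction for (iii) and (iv) controls the transient by ``$d$ times the maximum transient of a component'' of $\digr^d$. That is the wrong quantity: the largest cyclicity class can have as many as $n-d+1$ nodes, and $d\cdot\wiel(n-d+1)$ vastly exceeds $d\cdot\wiel(\lfloor n/d\rfloor)+(n\bmod d)$ (e.g.\ $n=10$, $d=2$ gives $130$ versus $34$). The correct argument, as in Sections~\ref{s:Kim} and~\ref{s:Sch}, applies the primitive bound only to nodes in a \emph{smallest} cyclicity class (of size $m\le\lfloor n/d\rfloor$) and then transfers the bound to all other nodes along short walks into a small class, with a case distinction on whether $m\le\lfloor n/d\rfloor-1$ or $m=\lfloor n/d\rfloor$ (in the latter case a counting argument shows there are at least $d-(n\bmod d)$ small classes, so every node is within distance $n\bmod d$ of one). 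Note also that the paper never proves Theorem~\ref{t:allbounds} in isolation: it is recovered as the Boolean specialization of Main Theorem~\ref{t:mainres}, with the $g=n$ case of Wielandt handled by a separate cycle-removal-and-insertion pumping argument (Theorem~\ref{PumpingLemL=n}), which in the purely Boolean setting your girth observation indeed renders unnecessary.
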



\begin{remark}
\label{r:allbounds} {\rm Clearly, the bound of Schwarz is tighter
than the bound of Wielandt in the imprimitive case ($d>1$), while in
the primitive case ($d=1$) they are equal to each other. The bound
of Kim is in the same relation with the bound of Dulmage and
Mendelsohn. Further, the bound of Dulmage and Mendelsohn is tighter
than that of Wielandt when $g<n-1$ and both bounds are equal when
$g=n-1$. In the remaining case $g=n$, the graph consists of a single
Hamiltonian cycle and periodicity starts from the very beginning, i.e.,
$T(\digr)=1$.

Let us show that the bound of Schwarz can be deduced from the bound
of Kim. Firstly, it can be seen that by substituting
$g=d(\left\lfloor\frac{n}{d}\right\rfloor-1)$ in Kim's bound and
using the identity $n=d\left\lfloor\frac{n}{d}\right\rfloor+(n \bmod
d)$ we obtain the bound of Schwarz. Hence the bound of Kim is
tighter when $\frac{g}{d}<\left\lfloor\frac{n}{d}\right\rfloor-1$,
and the bounds are equal when
$\frac{g}{d}=\left\lfloor\frac{n}{d}\right\rfloor-1$.

Consider the remaining case
$\frac{g}{d}=\left\lfloor\frac{n}{d}\right\rfloor$. By definitions of~$g$ and~$d$, the
length of any cycle on~$\digr$ equals to $g+td$ for some $t\geq 0$. Since
$\frac{g}{d}=\left\lfloor\frac{n}{d}\right\rfloor$, all 
cycles are of length~$g$, thus $g=d$ and
$\left\lfloor\frac{n}{d}\right\rfloor=1$. Therefore, both bounds
equal to $(n \bmod d)=n-g$.
}
\end{remark}

We are also interested in the improvements of
Theorem~\ref{t:allbounds} in terms of the  {\em factor rank\/} of a
matrix~$A\in\bool^{n\times n}$ (also known as the Boolean rank or
Schein rank). 

\begin{definition}[Factor rank in Boolean algebra]
The {\em Factor rank\/} of $A$ is
the least number~$r$ such that
\begin{equation}\label{e:rank}
A = \bigoplus_{\alpha=1}^r v_\alpha \otimes w^T_\alpha
\end{equation}
with Boolean vectors $v_1,w_1,\dots,v_r,w_r\in\bool^n$.
\end{definition}


The factor rank of~$A$ is at most~$n$ since~\eqref{e:rank} holds when choosing
$r=n$ and the $w_\alpha$ to be the unit vectors and the $v_\alpha$ to be the
columns of~$A$.

The following bounds involving the factor rank were
established:

\begin{theorem}\label{t:rank:bool}
Let~$\digr$ be a strongly connected primitive digraph with girth $g$, and let the associated
matrix of $\digr$ have factor rank~$r$.
The following upper bounds on the transient of~$\digr$ hold:
\begin{itemize}
\item[{\rm (i)}] (Gregory-Kirkland-Pullman~\cite{GKP-95})
$\ind(\digr)\leq\wiel(r)+1$;
\item[{\rm (ii)}] (Kim~\cite{Kim-79})
$\ind(\digr)\leq (r-2)\cdot g+r+1$.
\end{itemize}
\end{theorem}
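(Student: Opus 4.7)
The plan is to reduce the problem from $A\in\bool^{n\times n}$ to a smaller $r\times r$ Boolean matrix obtained from the factor rank decomposition, and then invoke Theorem~\ref{t:allbounds} on that auxiliary matrix. Write $A=V\otimes W^T$ with $V\in\bool^{n\times r}$ having columns $v_1,\dots,v_r$ and $W^T\in\bool^{r\times n}$ having rows $w_1^T,\dots,w_r^T$; by minimality of~$r$ we may assume every $v_\alpha$ and every $w_\alpha$ is nonzero. Introduce the auxiliary matrix $B:=W^T\otimes V\in\bool^{r\times r}$ and denote by $\mathcal H$ its associated digraph.

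Associativity of $\otimes$ yields the two identities
\[
A^{\otimes t}=V\otimes B^{\otimes(t-1)}\otimes W^T,\qquad
B^{\otimes t}=W^T\otimes A^{\otimes(t-1)}\otimes V\qquad (t\ge 1).
\]
The first identity transfers periodicity from $B$ to $A$: if $B^{\otimes(t+p)}=B^{\otimes t}$ holds for every $t\ge \ind(\mathcal H)$, then $A^{\otimes(t+p+1)}=A^{\otimes(t+1)}$ on the same range, so $\ind(\digr)\le \ind(\mathcal H)+1$. The second identity shows that $\mathcal H$ is primitive: primitivity of $\digr$ yields some $s$ with $A^{\otimes s}$ equal to the all-ones matrix $J_n$, hence $B^{\otimes(s+1)}=W^T\otimes J_n\otimes V=J_r$, where the last equality uses that no row of $W^T$ and no column of $V$ is zero.

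For part~(i), applying Theorem~\ref{t:allbounds}(i) to the primitive $r$-node digraph $\mathcal H$ gives $\ind(\mathcal H)\le \wiel(r)$, so $\ind(\digr)\le \wiel(r)+1$. For part~(ii), I additionally need $g(\mathcal H)\le g(\digr)=g$. Given a shortest cycle $i_0\to i_1\to\cdots\to i_g=i_0$ in $\digr$, the equality $A_{i_s,i_{s+1}}=1$ yields, for each $s$, an index $\alpha_s$ with $(v_{\alpha_s})_{i_s}=(w_{\alpha_s})_{i_{s+1}}=1$. Then $i_{s+1}$ witnesses $B_{\alpha_s,\alpha_{s+1}}=1$ (indices of $\alpha$ read mod $g$), so $\alpha_0\to\alpha_1\to\cdots\to\alpha_{g-1}\to\alpha_0$ is a closed walk of length $g$ in $\mathcal H$, which contains a cycle of length at most $g$. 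Applying Theorem~\ref{t:allbounds}(ii) to $\mathcal H$ gives $\ind(\mathcal H)\le(r-2)g(\mathcal H)+r\le(r-2)g+r$, and combining with the reduction above yields Kim's bound $\ind(\digr)\le (r-2)g+r+1$.

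The only nontrivial steps are verifying primitivity of $\mathcal H$ and the girth inequality $g(\mathcal H)\le g(\digr)$; both rest on the minimality of $r$ in the decomposition $A=V\otimes W^T$ (specifically, the nonvanishing of each $v_\alpha$ and each $w_\alpha$). Everything else is routine manipulation via associativity.
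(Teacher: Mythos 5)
Your proof is correct, and it takes a genuinely different route from the paper. The paper never proves Theorem~\ref{t:rank:bool} in isolation: it obtains it (and its extension to imprimitive and weighted matrices) as the Boolean specialization of Main Theorem~\ref{t:rank}, whose proof in Section~\ref{s:rank} embeds $A$ and the same auxiliary matrix $B=W^T\otimes V$ into the bipartite $(n+r)\times(n+r)$ matrix $Z$ of~\eqref{e:zdef} with $Z^{\otimes 2}=\operatorname{diag}(A,B)$, and then works at the level of critical rows, using Lemma~\ref{l:transfer} ($T_k(A)\le T_\beta(B)+1$ along a critical edge of $\crit(Z)$) together with Lemmas~\ref{l:ABZirred} and~\ref{l:related} to match up girths and cyclicities of the related critical components. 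You instead transfer periodicity of the \emph{entire} power sequence through the sandwich identity $A^{\otimes t}=V\otimes B^{\otimes(t-1)}\otimes W^T$ and then invoke Theorem~\ref{t:allbounds} on $\digr(B)$ as a black box; this is essentially the classical Gregory--Kirkland--Pullman/Kim argument, and your verifications that $\digr(B)$ is primitive and that $g(\digr(B))\le g$ (both resting on the nonvanishing of the $v_\alpha,w_\alpha$ forced by minimality of $r$) are correct counterparts of Lemma~\ref{l:ABZirred} and Lemma~\ref{l:related}(i). What each approach buys: yours is shorter and elementary, but it only controls the global transient and presupposes the Wielandt and Dulmage--Mendelsohn bounds, whereas the paper's heavier machinery sees individual critical rows and columns, drops the primitivity hypothesis, and reproves the classical bounds from scratch --- which is exactly what is needed for the weighted generalization that is the point of the paper. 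One trivial caveat: the step $(r-2)\cdot g(\digr(B))\le(r-2)\cdot g$ needs $r\ge 2$, but for $r=1$ strong connectivity forces $w^T\otimes v=1$, hence a loop and $g=g(\digr(B))=1$, so nothing is lost.
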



In fact, we will show that the bounds in Theorem~\ref{t:rank:bool} also hold
for
non-primitive matrices and that the analogous stronger bounds of Schwarz and
Kim
with the factor rank instead of~$n$ are true. See Main Theorem~\ref{t:rank}.




\if{
\begin{theorem}[{\cite[Theorem 3.4.5]{BR}}]
\label{t:graphpowers}
Let $\digr$ be a strongly connected graph with cyclicity $d$.
\begin{itemize}
\item[{\rm (i)}] For each $k\geq 1$, graph $\digr^k$ consists of
gcd$(k,d)$ isolated strongly connected components.
\item[{\rm (ii)}] Each component of $\digr^k$ has cyclicity $d/$gcd$(k,d)$ and
its node set consists of $d/$gcd$(k,d)$ cyclicity classes of $\digr$.
\end{itemize}
\end{theorem}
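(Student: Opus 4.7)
The plan is to exploit the canonical decomposition of a strongly connected digraph $\digr$ with cyclicity $d$ into its $d$ cyclicity classes $C_0,\ldots,C_{d-1}$ as in Definition~\ref{d:cyclasses}. The key property I use is that every walk in $\digr$ from $u\in C_i$ to $v\in C_j$ has length $\equiv j-i \pmod{d}$, and conversely, by eventual periodicity of the sequence of Boolean powers of the adjacency matrix, for every $s$ with $s\equiv j-i\pmod{d}$ there exist walks from $u$ to $v$ of length $\ell$ for all sufficiently large $\ell\equiv s\pmod{d}$. I will combine this with elementary facts about the cyclic permutation of classes induced by taking $k$-th powers.

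For part~(i) I observe that an edge $(u,v)$ of $\digr^k$ corresponds to a walk of length exactly $k$ in $\digr$, so that if $u\in C_i$ then $v\in C_{(i+k)\bmod d}$. Hence walks in $\digr^k$ are governed by the permutation $\sigma_k\colon C_i\mapsto C_{(i+k)\bmod d}$ on the cyclicity classes, whose orbits partition $\{C_0,\ldots,C_{d-1}\}$ into exactly $\gcd(k,d)$ orbits, each consisting of $d/\gcd(k,d)$ classes. Vertices in different orbits cannot be joined by any walk in $\digr^k$, since every such walk shifts the cyclicity class by $k$ modulo $d$. Within a single orbit $O$, given $u\in C_i\subseteq O$ and $v\in C_j\subseteq O$ one has $j\equiv i+mk\pmod{d}$ for some integer $m$; picking $m'$ large with $m'\equiv m\pmod{d/\gcd(k,d)}$ yields $m'k\equiv mk\equiv j-i\pmod{d}$, and by eventual periodicity of $\digr$ there exists a walk of length $m'k$ in $\digr$ from $u$ to $v$, equivalently a walk of length $m'$ in $\digr^k$. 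This shows each $\sigma_k$-orbit is a strongly connected component of $\digr^k$, settling (i) and also the assertion that the node set of each component is a union of exactly $d/\gcd(k,d)$ cyclicity classes of $\digr$.

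For the cyclicity statement in part~(ii), I note that a closed walk at $u$ of length $m$ in $\digr^k$ corresponds to a closed walk at $u$ of length $mk$ in $\digr$, which must be divisible by $d$; hence $m$ is a multiple of $d/\gcd(k,d)$. Conversely, by eventual periodicity, all sufficiently large multiples of $d/\gcd(k,d)$ are realized as closed walk lengths at $u$ in $\digr^k$, so the greatest common divisor of these lengths is exactly $d/\gcd(k,d)$. By the equivalent characterization of cyclicity as the least eventual period recalled after Definition~\ref{d:cyclasses}, this is the cyclicity of the component. The one delicate point I foresee is the use of eventual periodicity in the reverse direction: I need that once $\ell$ is large enough, \emph{every} admissible congruence class modulo $d$ is realized as a walk length in $\digr$. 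This is the bridge from purely modular bookkeeping on cyclicity classes to honest walks in the graph, and it rests on the finiteness of $\ind(\digr)$ rather than on any combinatorial trick.
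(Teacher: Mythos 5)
The paper itself gives no proof of this statement: Theorem~\ref{t:graphpowers} is quoted from \cite[Theorem 3.4.5]{BR}, so your argument can only be judged on its own merits. Its skeleton is the standard textbook one and the modular bookkeeping in it is all correct: edges of $\digr^k$ shift the cyclicity-class index by $k$, the orbits of $i\mapsto i+k$ on $\{0,\dots,d-1\}$ number $\gcd(k,d)$ and have size $d/\gcd(k,d)$, one has $m'k\equiv mk\pmod d$ whenever $m'\equiv m\pmod{d/\gcd(k,d)}$, and $d\mid mk$ if and only if $d/\gcd(k,d)\mid m$.

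The genuine gap is in the step you yourself flag as delicate, and your diagnosis of it is exactly backwards. You claim that ``for every sufficiently large $\ell\equiv j-i\pmod d$ there is a walk of length $\ell$ from $u$ to $v$'' rests on the finiteness of $\ind(\digr)$ ``rather than on any combinatorial trick.'' Finiteness of the transient only says the power sequence is eventually periodic with \emph{some} period $p$; it does not say that $p$ may be taken equal to $d$. With only that input, long walks from $u$ to $v$ could a priori be confined to a single residue class modulo $2d$ inside the admissible class modulo $d$, and then your choice of a large $m'\equiv m \pmod{d/\gcd(k,d)}$ would not be covered by anything you have proved. What closes the hole is precisely a combinatorial argument: the set of closed-walk lengths at a fixed node is closed under addition and has gcd exactly $d$ (splice every cycle of $\digr$ into a closed walk through that node to see its length divides out), hence by the standard numerical-semigroup fact it contains all sufficiently large multiples of $d$; concatenating one fixed $u$--$v$ walk with these closed walks then yields the bridge. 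Alternatively you could invoke the paper's assertion that the cyclicity is the least eventual period of the power sequence, but that assertion is of the same depth as the theorem being proved, and in this paper the cyclicity classes of Definition~\ref{d:cyclasses} are themselves \emph{defined} through this very theorem; so citing that definition and the walk-congruence property as given, as you do, is circular unless you also construct the classes and prove the congruence property from scratch (routine, but absent). A smaller instance of the same issue occurs in part (ii): you compute the gcd of closed-walk lengths at $u$ in a component of $\digr^k$, whereas cyclicity is the gcd of its cycle lengths; identifying the two quantities again requires the splice argument, not the appeal to eventual periods.
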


For max algebra,  it is interesting to analyze the situation when a graph
$\digr$ of cyclicity $d$ has a strongly connected subgraph $\digr'$ of cyclicity $\sigma$.
Note that in this case, $\sigma$ is always a multiple of $d$.

\begin{corollary}
\label{c:grsubgr}
Let $\digr$ be a strongly connected graph of cyclicity $d$, and
let $\digr'$ be a strongly connected subgraph of $\digr$, with node set $N'$ and cyclicity $\sigma$.
Let $k\geq 1$.
\begin{itemize}
\item[{\rm (i)}] The node set of any closed walk of $\digr$
 contains a node from each component of $\digr^k$. In particular, this also holds for $N'$.
 \item[{\rm (ii)}] There are gcd$(k,\sigma)$ components of $(\digr')^k$
such that the union of their node sets is $N'$. Each component of $\digr^k$
contains at least one component of $\digr'^k$.
\end{itemize}
\end{corollary}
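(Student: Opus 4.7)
The plan is to derive both parts from Theorem~\ref{t:graphpowers} applied once to $\digr$ and once to $\digr'$, using part~(i) as a bridge to part~(ii). For part~(i), I first note that, as observed in the paragraph following Definition~\ref{d:cyclasses}, each edge of a walk in $\digr$ advances the cyclicity-class index by~$1$ modulo~$d$. Consequently, if $W=(i_0,i_1,\ldots,i_\ell)$ is a nonempty closed walk, then $\ell$ is a positive multiple of~$d$, and the sequence of cyclicity classes of $i_0,i_1,\ldots,i_\ell$ cycles through $C_0,\ldots,C_{d-1}$ at least once, so $W$ contains a node from every cyclicity class of $\digr$. By Theorem~\ref{t:graphpowers}, every component of $\digr^k$ is a union of cyclicity classes of $\digr$, so $W$ meets each such component. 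The ``in particular'' statement about $N'$ follows because, $\digr'$ being strongly connected, one can concatenate a family of paths in $\digr'$ visiting all of its nodes into a single closed walk whose node set equals $N'$, and this is also a closed walk of~$\digr$.

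For part~(ii), the first assertion is a direct application of Theorem~\ref{t:graphpowers} to the strongly connected digraph $\digr'$ of cyclicity $\sigma$: the graph $(\digr')^k$ has exactly $\gcd(k,\sigma)$ strongly connected components, and their node sets partition~$N'$. For the second assertion, I combine two observations. First, any component $H'$ of $(\digr')^k$ lies inside some component $H$ of $\digr^k$, because any walk of $(\digr')^k$ is a walk of $\digr^k$, so if $i,j\in H'$ then they are mutually reachable by walks whose lengths are multiples of~$k$ in $\digr$ as well, placing them in the same component of $\digr^k$. Second, by part~(i), every component of $\digr^k$ contains a node of $N'$, hence a node of some component of $(\digr')^k$; by the first observation, that entire component of $(\digr')^k$ then sits inside the given component of $\digr^k$.

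The step that requires the most care is showing that a nonempty closed walk of $\digr$ visits every cyclicity class, but this is essentially immediate from the circular arrangement of cyclicity classes described after Definition~\ref{d:cyclasses}, and so does not present a real obstacle.
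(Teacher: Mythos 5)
Your proof is correct and takes essentially the same route as the paper's: part~(i) via the fact that a closed walk meets every cyclicity class combined with Theorem~\ref{t:graphpowers}, and part~(ii) by applying that theorem to $\digr'$ and then using part~(i) together with the containment of each component of $(\digr')^k$ in a single component of $\digr^k$. The only difference is cosmetic: you justify that containment by reachability along walks of length divisible by~$k$, whereas the paper invokes the fact that the components of $\digr^k$ are isolated from each other.
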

\begin{remark}
\label{r:grsubgr}
{\rm 
It can be shown that each component of $\digr^k$
contains exactly gcd$(k,\sigma)/$gcd$(k,d)$ components of $(\digr')^k$,
but we do not need this precision here.}
\end{remark}
\begin{proof}
(i): Take a closed walk $W$ of $\digr$ and recall that
it contains a node from each
cyclicity class of $\digr$. By Theorem~\ref{t:graphpowers}, the node set of each component
of $\digr^ k$ consists of several cyclicity classes, hence the
node setoff $W$ contains a node from each component of $\digr^k$.
For $N'$, take any closed walk in $\digr'$ that passes through every node of $\digr'$.

(ii): The first part of the claim follows from Theorem~\ref{t:graphpowers} part (i)
applied to $\digr'$. Next, part (i) implies that for each component of $\digr^k$
there is a component $H$ of $\digr'^k$ containing a node from that component of $\digr^k$. But
since the components of $\digr^k$ are isolated from each other, $H$ is contained in $\digr^k$.
\end{proof}

}\fi

\subsection{Weighted Digraphs and Max Algebra}


In a {\em weighted digraph\/} $\digr$, every edge $(i,j)\in E$ is
weighted by some {\em weight\/} $a_{i,j}$.
We consider the case of nonnegative
weights $a_{i,j}\in\Rp$ and 

\begin{definition}[Weight of a walk]
\label{d:weight}
The {\em weight} of a walk $W=(i_0,i_1,\ldots,i_t)$
is the product
\begin{equation}\label{e:def:weight}
p(W):=a_{i_0,i_1} \cdot a_{i_1,i_2}\cdots a_{i_{t-1},i_t}
\enspace.
\end{equation}
\end{definition}
Recall that the length of a walk $W=(i_0,i_1,\ldots,i_t)$ is 
$\ell(W)=t$.

Another common definition is letting edge weights be arbitrary reals and the
weight of walks be the sum of the weights of its edges.
One can navigate between these two definitions by taking the logarithm and the
exponential.

By {\em max algebra} we understand the set of nonnegative real numbers~$\Rp$
equipped with the usual multiplication $ab:=a\cdot b$ and tropical
addition $a\oplus b:=\max(a,b)$. This arithmetic is extended to
matrices and vectors in the usual way, which leads to max-linear
algebra, i.e. the theory of max-linear systems~\cite{ABG,But:10}.
The product of two matrices $A\in\Rp^{m\times n}$ and
$B\in\Rp^{n\times q}$ is defined by $(A\otimes
B)_{i,j}:=\max_{1\leq k\leq n} a_{i,k}\cdot b_{k,j}$, which defines
the max-algebraic matrix powers
$A^{\otimes t}:=\underbrace{A\otimes\ldots\otimes A}_{t \text{ times}}$.
The $(i,j)$\textsuperscript{th} entry of
$A^{\otimes t}$ will be denoted by $a_{i,j}^{(t)}$.
Boolean matrices are a special
case of max-algebraic matrices.

The walks of maximum weight in $\digr$ are
closely related with the entries of max-algebraic powers of the associated
nonnegative matrix of weights $A=(a_{i,j})$.
Conversely, one can associate a weighted digraph~$\digr(A)$ to every square
max-algebraic matrix~$A$.
The connection between max-algebraic powers and weights of walks is
based on the following fact called the {\em
optimal walk interpretation\/} of max-algebraic matrix powers:
$a_{i,j}^{(t)}$ is the maximum weight of all walks of length~$t$
from~$i$ to~$j$, or~$0$ if no such walk exists.

\begin{definition}[Maximum cycle geometric mean]
\label{d:mcgm}
If $\digr(A)$ has at least one 
nonempty cycle, then 
the maximum geometric cycle mean of $A\in\Rpnn$ is equal to
\begin{equation}
\label{e:mcgm}
\begin{split}
\lambda(A)&:=
\max \big\{ p(\cycle)^{1/\ell(\cycle)} \mid C\text{ is a nonempty cycle in
}\digr(A) \big\}
\end{split}
\end{equation}
Set $\lambda(A)=0$ if no nonempty cycle in~$\digr(A)$ exists.
\end{definition}

\begin{definition}[Critical graph]
\label{d:crit}
Let $A\in\Rpnn$. The cycles of $\digr(A)$ at which the maximum geometric cycle mean is attained are
called {\em critical}, and so are all nodes and edges that belong to
them. The set of all critical nodes is denoted by $N_c(A)$ and the set of all critical edges
is denoted by $E_c(A)$. 
The {\em critical graph}, denoted by $\crit(A)$, consists of
all critical nodes and all critical edges.
\end{definition}


As we have $\lambda(\alpha \cdot A)=\alpha\cdot \lambda(A)$ for all
$\alpha\in\Rp$ and $A\in\Rpnn$, we also have
$\lambda\big(A/\lambda(A)\big)=1$ whenever $\lambda(A)\neq0$. It is
$\lambda(A)\neq 0$ if and only if $\digr(A)$ contains a nonempty
cycle. In this case, the equality $A^{\otimes t} = \lambda(A)^t
\cdot \big(A/\lambda(A)\big)^{\otimes t}$ implies that we can indeed
assume $\lambda(A)=1$ without loss of generality when studying the
sequence of max-algebraic matrix powers. We will indeed assume
$\lambda(A)=1$ in the rest of the paper. It means that we avoid the
case when $\lambda(A)=0$. This case is trivial because there are no
critical nodes. Moreover, since there are no closed walks
on~$\digr(A)$, there are no walks with length more than~$n-1$, so
$A^{\otimes n}=0$.

The following definition is standard.

\begin{definition}[Irreducibility]
\label{d:irred}
A matrix $A\in\Rpnn$ is called {\em irreducible} if $\digr(A)$ is strongly connected, i.e., if for 
each $i,j\in N$ there exists a walk of nonzero weight whose starting node is $i$ and
end node is $j$. 

$A\in\Rpnn$ is called {\em reducible} if it is not irreducible.
\end{definition}

Cohen et al.~\cite{CDQV-83} first proved that
the sequence of max-algebraic matrix powers
of an irreducible
matrix~$A$ with~$\lambda(A)=1$
is eventually periodic.
\begin{definition}[Transient of matrices]
\label{d:transient}
The least nonnegative~$t$ satisfying $A^{\otimes(p+t)}=\lambda(A)^p\cdot A^{\otimes t}$ for some $p>0$
is called the {\em
transient\/} of~$A$ and denoted by $T(A)$.
\end{definition}

The transient of~$A$ depends not only on the nodes and edges in~$\digr(A)$, but
also on the specific weights in $A$.
It was studied by several authors,
including
Hartmann and Arguelles~\cite{HA-99},
Bouillard and Gaujal~\cite{BG-01},
Soto y Koelemeijer~\cite{SyK-03},
Akian et al.~\cite[Section 7]{AGW},
and
Charron-Bost et al.~\cite{CBFN-12}.
However, none of the upper bounds on the transient were generalizations of
any of the Boolean bounds of Theorem~\ref{t:allbounds} (in the sense that the
bounds of that theorem would be immediately recovered when specializing these results
to Boolean matrices). To see that $T(A)$ depends also on the weights of $A$, consider
\begin{equation*}
A=
\begin{pmatrix}
1 &\epsilon\\
\epsilon & 1/e
\end{pmatrix},
\end{equation*}
and observe that the transient of $(a_{2,2}^{(t)})_{t\geq 1}$ is equal to
$\lceil -2\log\epsilon\rceil$ if $\epsilon<1$.

In the present paper, we
generalize all the bounds in Theorem~\ref{t:allbounds} to the weighted
case.
We do this not by giving bounds on the transient of~$A$, but by giving bounds
on the transients of the critical rows and columns of~$A$. Let us give the corresponding definition.

\begin{definition}[Row and column transients]
\label{d:rowcolumn}
Let $A\in\Rpnn$.
The least nonnegative $t$ satisfying 
$a^{(p+t)}_{k,i}=\lambda(A)^p\cdot a^{(t)}_{k,i}$ for a fixed $k\in N$, all $i\in N$ and some $p>0$
is called the transient of the $k$th row of $A$  and 
denoted by $T_k(A)$, or just $T_k$ if the matrix is clear from the context.

Similarly, the least nonnegative $t$ satisfying 
$a^{(p+t)}_{i,k}=\lambda(A)^p\cdot a^{(t)}_{i,k}$ for a fixed $k\in N$, all $i\in N$ and some $p>0$
is called the transient of the $k$th column of $A$.
\end{definition}

\begin{remark}
\label{r:makesense}
{\rm  This definition makes sense also when $A$ is reducible. However, in that case the
transient is defined only for some rows and columns. Below we are interested only in the 
case of critical rows and columns, for which the transients always exist regardless of 
the irreducibility of $A$.
}
\end{remark}

\begin{remark}
\label{r:boolmax}
{\rm In the Boolean case, all rows and columns are critical, hence we are really
generalizing the Boolean bounds.
We are
motivated by a result of Nachtigall~\cite{Nacht}
who showed that the
transient of critical rows and columns does not exceed~$n^2$. Later, Sergeev
and Schneider~\cite{SerSch} conjectured that, in fact, this
transient should not exceed $\wiel(n)$.
In particular, we prove this conjecture.}
\end{remark}

The following is the first main result of the paper.
\begin{maintheorem}
\label{t:mainres}
Let $A\in\Rpnn$ be irreducible and let~$k$ be a critical node.
Denote by $d$ the cyclicity of $\digr(A)$,
by $H$ the component of the critical graph $\crit(A)$ containing~$k$, and by $\lvert H\rvert$ the number
of nodes in $H$.
The following quantities are upper bounds on the transient of the $k$\textsuperscript{th} row and the $k$\textsuperscript{th}
column:
\begin{itemize}
\item[{\rm (i)}] (Wielandt bound) $\wiel(n)$
\item[{\rm (ii)}] (Dulmage-Mendelsohn bound)
$ (n-2)\cdot g(H) + \lvert H\rvert$
\item[{\rm (iii)}] (Schwarz bound) $\displaystyle
d\cdot\wiel\left(\left\lfloor\frac{n}{d}\right\rfloor\right)
+ (n\bmod d)$
\item[{\rm (iv)}]  (Kim bound) $\displaystyle
\left(\left\lfloor\frac{n}{d}\right\rfloor-2\right)\cdot g(H)+\min(n, \lvert
H\rvert+ \left(n\bmod d)\right)$
\end{itemize}
The first two bounds also hold in the case when $A$ is reducible.
\end{maintheorem}

\begin{remark}
\label{r:genperiod}
{\rm Denote by $\gamma(\crit(A))$ the least common multiple of the cyclicities of
all strongly connected components of $\crit(A)$.  This number is well-known to be the least eventual period
of the sequence $(A^{\otimes t})_{t\geq 1}$ when $A$ is irreducible (see~\cite{CDQV-83,But:10}). It is also
the least eventual period of the sequence of submatrices of $A^{\otimes t}$ extracted from the critical
rows or the critical columns (also in the reducible case). For an individual critical row or column, the least eventual period can be shown to
be equal to the cyclicity of the component of $\crit(A)$ where the index of
that row or column lies (see Remark~\ref{r:remHperiod}).
}
\end{remark}

\begin{remark}
\label{r:mainres} {\rm As in the Boolean case, the bound of Schwarz (resp.\ Kim) is tighter than
the bound of Wielandt (resp.\ Dulmage and Mendelsohn) when the corresponding component of $\digr$ is imprimitive.
Moreover, the bound of Wielandt is never tighter than that of Dulmage and Mendelsohn when $g(H)\leq n-1$. Unlike for the unweighted graphs,
the case $g(H)=n$ is non-trivial and will be treated below. 
Likewise, the bound of Schwarz is never tighter than the bound of Kim when
$\frac{g(H)}{d}\leq \left\lfloor\frac{n}{d}\right\rfloor-1$, but the case $\frac{g(H)}{d}=\left\lfloor\frac{n}{d}\right\rfloor$
has to be treated separately. Here we prefer to deduce the bound of Kim from the bound of Dulmage and Mendelsohn  in the same way as the bound of Schwarz is derived from the bound of Wielandt
(similar to the approach of Shao and Li~\cite{SL-87}).
}
\end{remark}

\begin{definition}[Factor rank in max algebra]
In max algebra, {\rm factor rank\/} of $A\in\Rpnn$ is the least number $r$
such that~\eqref{e:rank} holds for some vectors
$v_1,w_1,\ldots,v_r,w_r\in\Rpn$. 
\end{definition}

In our next main result, we show
that the results of Main Theorem~\ref{t:mainres} can be improved by
means of factor rank, thus obtaining a max-algebraic extension of
Theorem~\ref{t:rank:bool}.

\begin{maintheorem}
\label{t:rank} Let $A\in\Rpnn$ be irreducible. Denote by $d$ the
cyclicity of $\digr(A)$ and by~$r$ the factor rank of~$A$. Let~$k$ be
critical. Denote by $H$ the component of the critical
graph $\crit(A)$ containing~$k$, and let $h\leq\min(\lvert H\rvert,r)$ be the
parameter defined below in~\eqref{e:hdef}. The following upper
bounds on the transient of the $k$\textsuperscript{th} row and $k$\textsuperscript{th} column hold:
\begin{itemize}
\item[{\rm (i)}] $\displaystyle
\wiel\left(r\right) + 1$;
\item[{\rm (ii)}] $\displaystyle
\left(r-2\right)\cdot g(H)+ h + 1$.

\item[{\rm (iii)}] $\displaystyle
d\cdot\wiel\left(\left\lfloor\frac{r}{d}\right\rfloor\right)
+ (r\bmod d) + 1$;
\item[{\rm (iv)}] $\displaystyle
\left(\left\lfloor\frac{r}{d}\right\rfloor-2\right)\cdot g(H)+ \min(r,h+(r\bmod d)) + 1$.
\end{itemize}
The first two bounds apply to reducible matrices as well.
\end{maintheorem}

\begin{remark}
{\rm
While all parameters appearing in the bounds of Main Theorem~\ref{t:mainres} only depend
on the unweighted digraphs underlying~$\digr(A)$ and~$\crit(A)$,
the factor rank~$r$ of Main Theorem~\ref{t:rank} depends on the values of~$A$,
i.e. on the weights on~$\digr(A)$.
}
\end{remark}


The next five sections of the paper contain the proofs of
Main Theorem~\ref{t:mainres} and Main Theorem~\ref{t:rank}.
That is, we will prove that $T_k(A)$ for a critical index $k$
is less than any of the quantities in Main~Theorem~\ref{t:mainres}
and Main~Theorem~\ref{t:rank}.
Applying the result to the transposed matrix~$A^T$,
we see that the bounds also hold for the transients of the columns.

The proofs do not use the results of Theorem~\ref{t:allbounds} or Theorem~\ref{t:rank:bool} and hence, in
particular, we give new proofs for those classical results.

\if{
Note that, in contrast to the non-weighted case, only~(iii) encompasses~(i) in
Theorem~\ref{t:mainres};
all other pairs of bounds are mutually incomparable.
This is due to the occurrence of the parameters $g(H)$ and $\lvert H\rvert$ of
the critical graph, which do not have an equally strong connection to~$d$
and~$n$ as in the Boolean case.
}\fi


\subsection{Visualization}
\label{ss:vis}

In the end of this section, let us recall a result on diagonal matrix
scaling, which we will use.

Let $X$ be an $n\times n$ nonnegative diagonal matrix, i.e. a
matrix whose diagonal entries are positive and whose off-diagonal
entries are zero. Consider the operation of {\em diagonal similarity
scaling\/} $A\mapsto X^{-1}AX$, applied to any $A\in\Rpnn$. It can be
checked that the diagonal similarity scaling preserves $\lambda(A)$
and commutes with max-algebraic matrix powering: for $B=X^{-1}AX$ we
have $\lambda(A)=\lambda(B)$ and $B^{\otimes t}=X^{-1}A^{\otimes t}X$. Hence, to analyze
max-algebraic matrix powers we will use a particular
``canonical'' form that can be always reached by means of a diagonal
similarity scaling.

\begin{definition}[Visualization]
\label{d:visualize}
A matrix $A\in\Rpnn$ is called {\em visualized\/} if it has
$a_{i,j}\leq\lambda(A)$ for all $i,j\in N$. For a visualized matrix, it
also follows that $a_{i,j}=\lambda(A)$ for all critical edges $(i,j)\in E_c$.

Further if $a_{i,j}=\lambda(A)$ holds only for all critical edges $(i,j)\in E_c$, matrix
$A$ is called {\em strictly visualized.\/}
\end{definition}

\begin{remark}\label{r:visualize}
It is known that every nonnegative matrix with positive maximum geometric cycle mean can be brought to a visualized
form by means of a diagonal similarity scaling. Moreover, every nonnegative matrix can be brought
to a strictly visualized form~\cite{SSB}. Hence in our analysis
of max-algebraic powers, we can assume without loss of generality that $A$ is
visualized (and, moreover, strictly visualized), which we do in the rest of the paper.
Since we also assume~$\lambda(A)=1$, it means that all entries are between~$0$ and~$1$ and
critical edges are exactly edges with weight~$1$.
\end{remark}

An early use of visualization scaling (unrelated to max algebra) can
be found in Fiedler and Pt\'{a}k~\cite{FP}, and the scaling was
studied in more detail in~\cite{SSB}. For a short survey on the use of
visualization scaling
in max algebra see~\cite{Ser-13}. Let us conclude with the following
observation concerning
the visualization of max-algebraic powers.
Recall that $\digr^t$ denotes the $t$\textsuperscript{th} power of an arbitrary digraph~$\digr$.

\begin{lemma}[cf.~{\cite[Lemma 2.9]{BSST},\cite{Ser-09}}]
\label{l:CAk}
Let $A\in\Rpnn$ and $t\geq 1$.
\begin{itemize}
 \item[{\rm (i)}] $\crit(A^{\otimes t})=(\crit(A))^t$,
\item[{\rm (ii)}] If $A$ is visualized (or strictly visualized), then so is $A^{\otimes t}$.
\end{itemize}
\end{lemma}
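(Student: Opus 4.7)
The plan is to prove (ii) in the simple direction first, then prove (i), and finally use (i) to establish the strict visualization part of (ii). Throughout we exploit the standing assumption that $A$ is (strictly) visualized with $\lambda(A)=1$, so that every edge of $\digr(A)$ has weight at most $1$, with weight exactly $1$ precisely on the critical edges.

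For the easy half of (ii), every walk of length $t$ is a product of $t$ edge weights each bounded by $\lambda(A)$, so $a^{(t)}_{i,j}\leq\lambda(A)^t=\lambda(A^{\otimes t})$, proving that $A^{\otimes t}$ is visualized.

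For part (i), I would prove the two inclusions separately. For $\crit(A^{\otimes t})\subseteq(\crit(A))^t$, take an edge on a critical cycle $(i_0,\dots,i_s=i_0)$ of $A^{\otimes t}$. Its total weight is $\prod_k a^{(t)}_{i_{k-1},i_k}=\lambda(A)^{ts}$. Choose optimal length-$t$ walks in $\digr(A)$ realizing each $a^{(t)}_{i_{k-1},i_k}$ and concatenate them into a closed walk $W$ in $\digr(A)$ of length $ts$ and weight $\lambda(A)^{ts}$. Because $A$ is visualized, the only way a product of $ts$ factors, each $\leq\lambda(A)$, can equal $\lambda(A)^{ts}$ is if every factor equals $\lambda(A)$; by strict visualization every such edge is critical, so $W$ lies entirely in $\crit(A)$, and each length-$t$ segment is an edge of $(\crit(A))^t$.

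For the reverse inclusion $(\crit(A))^t\subseteq\crit(A^{\otimes t})$, an edge $(i,j)\in(\crit(A))^t$ corresponds to a length-$t$ walk $W$ in $\crit(A)$. Each edge of $W$ has weight $\lambda(A)$, so $p(W)=\lambda(A)^t$, and combined with the bound above we obtain $a^{(t)}_{i,j}=\lambda(A)^t$. It remains to exhibit a critical cycle of $A^{\otimes t}$ through $(i,j)$. Since $\crit(A)$ contains no edges between its strongly connected components, $i$ and $j$ lie in the same component $H$, of some cyclicity $d_H$. The existence of a length-$t$ walk from $i$ to $j$ forces $\gcd(t,d_H)\mid (\text{distance class difference})$, which I expect to be the main obstacle of the proof: one needs to apply the cyclicity theory of $H$ (essentially Theorem~\ref{t:graphpowers} applied to $H$) to produce a return walk in $H$ from $j$ to $i$ whose length is a multiple of $t$, say $st$. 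Splitting this return walk into $s$ segments of length $t$ yields a cycle of length $s+1$ in $(\crit(A))^t$ containing $(i,j)$; lifting to $A^{\otimes t}$ gives a cycle whose edges each have weight $\lambda(A)^t$, hence total weight $\lambda(A^{\otimes t})^{s+1}$, so it is critical.

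Finally, for the strict visualization half of (ii), I would use (i). If $a^{(t)}_{i,j}=\lambda(A)^t$, then some length-$t$ walk realizes this weight, and by the same product argument, every one of its edges has weight exactly $\lambda(A)$; strict visualization of $A$ forces these edges to lie in $\crit(A)$. Thus $(i,j)\in E((\crit(A))^t)=E_c(A^{\otimes t})$ by part (i), so $A^{\otimes t}$ is strictly visualized. The converse direction of strict visualization follows from the computation $a^{(t)}_{i,j}=\lambda(A)^t$ already established for edges of $(\crit(A))^t$.
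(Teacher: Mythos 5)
The paper does not actually prove this lemma: it is imported from the literature (the ``cf.''~citations to \cite[Lemma 2.9]{BSST} and \cite{Ser-09}), so there is no in-paper argument to compare against. Your self-contained proof is essentially correct and follows the standard route (optimal-walk interpretation plus the product-of-factors-each-at-most-$\lambda$ argument), and you correctly identify the one genuinely non-obvious step in the inclusion $(\crit(A))^t\subseteq\crit(A^{\otimes t})$, namely closing the edge $(i,j)$ into a cycle; Theorem~\ref{t:graphpowers} applied to the component $H$ does exactly this, since the components of $H^t$ are isolated and strongly connected, so $(i,j)$ lies on a cycle of $H^t$ whose lift is a closed walk in $\digr(A^{\otimes t})$ with every edge of weight $\lambda(A)^t$. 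Three small points should be made explicit to make the argument airtight. First, you use $\lambda(A^{\otimes t})=\lambda(A)^t$ without proof; the inequality $\leq$ follows from your entrywise bound, and $\geq$ follows by traversing a critical cycle of $A$ $t$ times and reading the result as a closed walk in $\digr(A^{\otimes t})$. Second, the closed walk you build in the reverse inclusion need not be a cycle, so to conclude that $(i,j)$ is a \emph{critical edge} in the sense of Definition~\ref{d:crit} you should decompose the closed walk into cycles and observe that each of them, having all edges of weight $\lambda(A^{\otimes t})$, is critical. Third, your proof of part (i) uses \emph{strict} visualization (to pass from ``weight $\lambda(A)$'' to ``critical''), whereas the lemma is stated for arbitrary $A\in\Rpnn$; this is harmless, but one should record that both sides of (i) are invariant under diagonal similarity scaling (since $\crit(X^{-1}AX)=\crit(A)$ and $(X^{-1}AX)^{\otimes t}=X^{-1}A^{\otimes t}X$) and that every matrix with $\lambda(A)>0$ can be strictly visualized, the case $\lambda(A)=0$ being vacuous. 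With these additions the proof is complete.
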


\if{
\begin{theorem}
\label{circulant} Let $A\in\Rp^{n\times n}$ be a visualized matrix,
and let $i,j\in N_c(A)$ be such that $[i]\to_l [j]$ for some $l\geq
0$. Then for all $r\geq T_c(A)$
\begin{equation}
\label{period} A_{\cdot i}^r=A_{\cdot j}^{r+l},\
A_{j\cdot}^r=A_{i\cdot}^{r+l}.
\end{equation}
\end{theorem}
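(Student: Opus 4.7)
The plan is to establish the column identity $A_{\cdot i}^{r} = A_{\cdot j}^{r+l}$ entry by entry, proving that $a_{k,i}^{(r)} = a_{k,j}^{(r+l)}$ for every $k \in N$, and then to obtain the row identity $A_{j\cdot}^{r} = A_{i\cdot}^{r+l}$ by applying the column argument to the transposed matrix $A^T$. Throughout, I would exploit that $A$ is visualized with $\lambda(A)=1$, so that every critical edge carries weight exactly $1$ and concatenation with a critical walk leaves the weight of any walk unchanged.

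For the easy inequality $a_{k,i}^{(r)} \leq a_{k,j}^{(r+l)}$, the hypothesis $[i] \to_l [j]$ yields a critical walk from $i$ to $j$ of length $l$ and weight $1$; appending it to an optimal walk from $k$ to $i$ of length $r$ produces a walk from $k$ to $j$ of length $r+l$ with the same weight. The reverse inequality $a_{k,j}^{(r+l)} \leq a_{k,i}^{(r)}$ is where I would use the assumption $r\ge T_c(A)$. Let $H$ be the strongly connected component of $\crit(A)$ containing $i$ and $j$, and let $\sigma$ be its cyclicity. Since $H$ is strongly connected with cyclicity $\sigma$ and the class separation $[i]\to_l[j]$ forces the inverse residue modulo $\sigma$ in the opposite direction, I would pick a critical walk from $j$ back to $i$ of length $l'$ large enough that $l+l'=m\sigma$ for some integer $m\ge 1$. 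Concatenating an optimal walk from $k$ to $j$ of length $r+l$ with this walk yields $a_{k,i}^{(r+m\sigma)} \geq a_{k,j}^{(r+l)}$. By Remark~\ref{r:genperiod}, the period of the $i$-th critical column equals $\sigma$, so for $r\ge T_c(A)$ we have $a_{k,i}^{(r+m\sigma)}=a_{k,i}^{(r)}$, which closes the argument.

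The row identity then follows from the column identity applied to $A^T$, using that transposition preserves $\lambda$, reverses edges of the critical graph, and turns $[i]\to_l[j]$ into $[j]\to_l[i]$, while the critical transient of $A^T$ is bounded by the same quantity as that of $A$. The main subtlety lies in guaranteeing the existence of a critical walk from $j$ to $i$ of length exactly $l'$ with $l+l'$ a positive multiple of $\sigma$; this reduces to the standard fact that inside a strongly connected component of cyclicity $\sigma$, critical walks of every sufficiently large length in the correct residue class modulo $\sigma$ connect any prescribed pair of nodes (combine one fixed critical walk from $j$ to $i$ with enough traversals of a critical cycle through $i$). Once this combinatorial input is in place, the periodicity of critical columns past $T_c(A)$ carries the proof through without further work.
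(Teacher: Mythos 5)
First, a point of comparison: the paper itself contains no proof of this statement --- it sits in a source block excluded from compilation --- so the fair benchmark is the paper's closest machinery, namely Lemma~\ref{lem:trans:walk} (the shift property along an exact-length critical walk) and Remarks~\ref{r:genperiod} and~\ref{r:remHperiod} (the cyclicity of a critical component is the eventual period of its rows and columns). Your proposal is assembled from precisely these ingredients: weight-preserving concatenation with weight-one critical walks in a visualized matrix with $\lambda(A)=1$, closed off by eventual periodicity with period $\sigma$, and a transposition step for the row identity. Your reverse inequality $a_{k,j}^{(r+l)}\leq a_{k,i}^{(r)}$ and the transposition argument are correct as written, including the observation that any critical walk from $j$ back to $i$ automatically has length $\equiv -l \pmod{\sigma}$.

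The gap is in your ``easy'' inequality: the hypothesis $[i]\to_l[j]$ does \emph{not} yield a critical walk from $i$ to $j$ of length exactly $l$. The relation $\to_l$ is a relation between cyclic classes and depends only on $l \bmod \sigma$; it guarantees only critical walks from $i$ to $j$ of length congruent to $l$ modulo $\sigma$. The principal instance of the theorem is exactly where your claim fails: $l=0$ with $i\neq j$ in the same cyclic class (equality of critical columns within a class), where no walk of length $0$ exists. For a concrete failure with $l>0$, take the critical graph on $\{1,2,3,4\}$ with edges $1\to2$, $2\to3$, $3\to4$, $4\to1$, $2\to1$: here $\sigma=2$, the classes are $\{1,3\}$ and $\{2,4\}$, so $[3]\to_1[2]$ holds, yet the shortest critical walk from $3$ to $2$ has length $3$. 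Note that this is not a cosmetic slip: the entire content of the statement beyond Lemma~\ref{lem:trans:walk}(i), which the paper does prove, is the passage from exact walk lengths to congruence classes, and your forward step silently collapses back to the already-known exact-length case. Fortunately the repair is the very maneuver you use on the other side: pick a critical walk from $i$ to $j$ of some realizable length $l_0\equiv l\pmod{\sigma}$, deduce $a_{k,j}^{(r+l_0)}\geq a_{k,i}^{(r)}$, and then, since $r+l_0$ and $r+l$ are both at least $T_c(A)$ and congruent modulo $\sigma$, which is an eventual period of column $j$, conclude $a_{k,j}^{(r+l)}=a_{k,j}^{(r+l_0)}\geq a_{k,i}^{(r)}$. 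With that one change the proof is complete.
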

}\fi


\section{Proof of Dulmage-Mendelsohn Bound}
\label{s:DM}

In this section, we want to prove the second bound of Main Theorem~\ref{t:mainres}.
In fact, we first argue that $T_k(A)\le n-1$ when~$a_{k,k}=\lambda(A)$, from which we get
$T_k(A)\le g(H)(n-1)$ for $H$ a component of the critical graph and $k$ in a minimal cycle of~$H$,
and finally the general bound.

The proof splits into the following lemmas, all of them apply to any $A\in\Rpnn$ and will be used again later. 
Without loss of generality, we will assume in the proofs that~$\lambda(A)=1$ and $A$~is visualized, that is all edges have weight at most~$1$ and all critical edges have weight~$1$. (See remark~\ref{r:visualize})

The first lemma gives a means to bound the transient of a row of the matrix.
More specifically, it states that a row is periodic as soon as its entries are
equal to that of a higher matrix power.
The proof is a straightforward calculation.

\begin{lemma}\label{lem:trans:multiples}
Let~$k\in N$. Further assume that there exist $r<s$ such
that $a^{(r)}_{k,j} = a^{(s)}_{k,j}$ for all $j\in N$. Then $T_k\leq
r$.

In particular, $T_k(A)\leq m\cdot T_k\big(A^{\otimes m}\big)$ for all $m\geq1$.
\end{lemma}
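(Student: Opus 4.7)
The plan is to prove both statements by a direct induction on the max-algebraic recurrence, following the standing convention $\lambda(A)=1$.

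For the first statement, I would set $p := s-r > 0$ and show by induction on $t \geq 0$ that $a^{(r+t)}_{k,j} = a^{(r+t+p)}_{k,j}$ for every $j \in N$. The base case $t=0$ is exactly the hypothesis. For the induction step, I would use the standard recurrence
\begin{equation*}
a^{(u+1)}_{k,j} \;=\; \max_{\ell \in N}\, a^{(u)}_{k,\ell} \cdot a_{\ell,j}
\end{equation*}
with $u=r+t$ and $u=r+t+p$ respectively; the two maxima coincide by the induction hypothesis applied entrywise. Since $\lambda(A)=1$, the resulting relation $a^{(r+t+p)}_{k,j}=\lambda(A)^p \cdot a^{(r+t)}_{k,j}$ for all $j$ and all $t\geq 0$ is precisely the condition in Definition~\ref{d:rowcolumn}, yielding $T_k \leq r$.

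For the ``in particular'' part, I would set $B := A^{\otimes m}$ and note that $\lambda(B)=\lambda(A)^m = 1$ and $b^{(u)}_{k,j} = a^{(um)}_{k,j}$ for every $u \geq 0$. By definition of $T_k(B)$, there exists $q>0$ such that $b^{(T_k(B)+q)}_{k,j}=b^{(T_k(B))}_{k,j}$ for all $j$, which translates to $a^{(m(T_k(B)+q))}_{k,j} = a^{(m T_k(B))}_{k,j}$ for all $j$. Applying the first part of the lemma with $r := m\cdot T_k(B)$ and $s := m\cdot(T_k(B)+q)$ then gives $T_k(A) \leq m\cdot T_k(A^{\otimes m})$.

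There is no real obstacle here: the whole argument is a one-line induction together with an unpacking of the definitions, and the ``in particular'' statement is just the first part rephrased for powers of $A$. The only point that warrants care is keeping the factor $\lambda(A)^p$ correct when $\lambda(A)\neq 1$, which is handled by the standing normalization.
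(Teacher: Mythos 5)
Your proof is correct and essentially matches the paper's: the paper obtains the same conclusion in one chain of equalities by writing $a^{(t+p)}_{k,j}=\max_{l} a^{(s)}_{k,l}\cdot a^{(t+p-s)}_{l,j}=\max_{l} a^{(r)}_{k,l}\cdot a^{(t-r)}_{l,j}=a^{(t)}_{k,j}$ for all $t\geq r$, which is just your induction collapsed into a single walk-splitting step. Your unpacking of the ``in particular'' clause via $b^{(u)}_{k,j}=a^{(um)}_{k,j}$ is exactly the intended (and in the paper unwritten) argument.
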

\begin{proof}
Set $p=s-r$ and let $j\in N$. Then, for all $t\geq r$:
\[
a^{(t+p)}_{k,j} = \max_{l\in N} a^{(s)}_{k,l} \cdot a^{(t+p-s)}_{l,j}
= \max_{l\in N} a^{(r)}_{k,l} \cdot a^{(t-r)}_{l,j} = a^{(t)}_{k,j}
\]
This concludes the proof.
\end{proof}

The next lemma shows that a stronger form of the Weighted
Dulmage-Mendelsohn bound holds if~$k$ lies on a critical cycle of
length~$g(H)$. 
Its proof uses the fact that~$k$ lies on a critical cycle of length~$1$ in the
digraph of the matrix power $A^{\otimes g(H)}$.
Denote by $A_{k\cdot}$ the $k$\textsuperscript{th} row of $A$.

\begin{lemma}[Nachtigall~\cite{Nacht}]\label{lem:nacht}
Let~$k$ be a critical node on a critical cycle of length~$\ell$.
Then $T_k\leq (n-1)\cdot \ell$ and~$\ell$ is an eventual period
of~$A^{\otimes t}_{k\cdot}$
\end{lemma}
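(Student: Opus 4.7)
The plan is to reduce the general case to the special situation where $k$ carries a critical loop, by passing to the matrix power $B := A^{\otimes \ell}$. By Remark~\ref{r:visualize} I assume without loss of generality that $\lambda(A) = 1$ and $A$ is visualized; by Lemma~\ref{l:CAk}(ii) the matrix~$B$ is then also visualized, with $\lambda(B) = 1$. The critical cycle of length~$\ell$ through~$k$ gives a closed walk of weight $\lambda(A)^\ell = 1$ from~$k$ to~$k$ of length~$\ell$ in $\digr(A)$, which translates to $b_{k,k} = 1$: a critical loop at~$k$ in $\digr(B)$.

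I would then establish two facts about the sequence $(b^{(t)}_{k,j})_{t\geq 0}$, for each fixed~$j$. First, it is non-decreasing in~$t$, because $b^{(t+1)}_{k,j} \geq b_{k,k}\cdot b^{(t)}_{k,j} = b^{(t)}_{k,j}$: the loop lets one pad any walk by an additional step at~$k$ without changing its weight. Second, the sequence stabilizes by $t = n-1$. For this, set $M_j := \sup_{t\geq 0} b^{(t)}_{k,j}$ and observe that any walk of length $\geq n$ in $\digr(B)$ must contain a cycle by pigeonhole, which has weight $\leq \lambda(B) = 1$ thanks to visualization, so excising it does not decrease the walk weight. Iterating cycle-excision shows that $M_j$ is already attained by some walk of length $t_j \leq n-1$; combined with monotonicity this forces $b^{(t)}_{k,j} = M_j$ for every $t \geq n-1$.

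In particular $b^{(n-1)}_{k,j} = b^{(n)}_{k,j}$ for all~$j$, which unrolls in~$A$ to $a^{((n-1)\ell)}_{k,j} = a^{(n\ell)}_{k,j}$ for all~$j$. Applying Lemma~\ref{lem:trans:multiples} to~$A$ with $r = (n-1)\ell$ and $s = n\ell$ then yields both $T_k \leq (n-1)\ell$ and that $p = \ell$ is an eventual period of $A^{\otimes t}_{k\cdot}$, as desired. The only non-routine step is recognizing which power of~$A$ to pass to; once one sees that a critical cycle of length~$\ell$ through~$k$ becomes a critical loop in $\digr(A^{\otimes \ell})$, the rest is a straightforward monotonicity-plus-pigeonhole bookkeeping.
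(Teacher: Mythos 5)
Your proof is correct and is essentially the paper's own argument: pass to $B=A^{\otimes\ell}$ so that $b_{k,k}=1$, use monotonicity of $b^{(t)}_{k,j}$ together with cycle excision (valid since $\lambda(B)=1$) to get stabilization by $t=n-1$, and finish with Lemma~\ref{lem:trans:multiples}. The only cosmetic difference is that you apply that lemma directly to $A$ with $r=(n-1)\ell$, $s=n\ell$, whereas the paper invokes its corollary $T_k(A)\le \ell\cdot T_k(A^{\otimes\ell})$; these are the same computation.
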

\begin{proof}
Set $B=A^{\otimes \ell}$. Then $b_{k,k}=1$ and hence
$b_{k,j}^{(t)}$ is non-decreasing with~$t$.
But since we assume~$\lambda(A)=1$, $\lambda(B)=1$ and $\sup_{t\in\mathbb{N}}B^{\otimes t}=\max_{t=1}^{n-1}B^{\otimes t}$,
so $b_{k,j}^{(t)}$ is constant for~$t\ge n-1$, i.e. $T_k(A^{\otimes \ell})\le n-1$.

Lemma~\ref{lem:trans:multiples} now concludes the proof.
\end{proof}

The following result enables us to use the bound of Lemma~\ref{lem:nacht}
for nodes that do not lie on a critical cycle of minimal length. 
As usual, we assume that $\lambda(A)=1$ and $A$ is visualized. 
The proof makes heavy use of this assumption.
It is closely related to the circulant symmetries of the critical part of
 max-algebraic powers in the periodic regime, as described by Butkovi\v{c} and Sergeev~\cite[Section 8.3]{But:10},
 \cite{Ser-09}.

\begin{lemma}
\label{lem:trans:walk}
Let $k$ and $l$ be two indices of $N_c(A)$,
and suppose that there exists a walk from $k$ to $l$, of length $r$ and with all edges critical.
\begin{itemize}
\item[{\rm (i)}] If $t\geq T_l(A)$,  then $A_{k\cdot}^{\otimes (t+r)}=\lambda(A)^r\cdot A_{l\cdot}^{\otimes t}$.
\item[{\rm (ii)}] $T_k(A)\leq T_l(A)+r$.
\end{itemize}
\end{lemma}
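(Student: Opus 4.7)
We may assume $\lambda(A)=1$ and that $A$ is visualized, as in the preceding lemmas; by Lemma~\ref{l:CAk} every entry of every max-algebraic power then lies in $[0,1]$ and every critical walk has weight exactly $1$. Under this reduction, part (i) becomes the identity $a_{k,j}^{(t+r)} = a_{l,j}^{(t)}$ for all $j \in N$ and $t \geq T_l(A)$, which I plan to establish by proving both inequalities. The direction $a_{k,j}^{(t+r)} \geq a_{l,j}^{(t)}$ is immediate: the hypothesis gives $a_{k,l}^{(r)} = 1$, so concatenating the critical $k \to l$ walk with an optimal $l \to j$ walk of length $t$ produces a walk of length $t+r$ and weight $a_{l,j}^{(t)}$.

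For the reverse inequality, I first observe that every critical edge lies on some critical cycle and hence inside a single strongly connected component of $\crit(A)$. Thus $k$ and $l$ belong to a common component of $\crit(A)$, so there is a critical walk from $l$ to $k$ of some length $r'$. Setting $p := r + r'$ and concatenating yields a closed critical walk through $l$ of length $p$, whence $a_{l,l}^{(p)} = 1$ and therefore
\[
a_{l,j}^{(t+p)} \geq a_{l,l}^{(p)} \cdot a_{l,j}^{(t)} = a_{l,j}^{(t)} \qquad\text{for every } t.
\]
For $t \geq T_l$ the sequence $m \mapsto a_{l,j}^{(t+mp)}$ is therefore non-decreasing, and it is also periodic in $m$ since $a_{l,j}^{(s)}$ is periodic in $s$ for $s \geq T_l$. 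A periodic non-decreasing real sequence is constant, so $a_{l,j}^{(t+p)} = a_{l,j}^{(t)}$. Using the critical $l \to k$ walk,
\[
a_{l,j}^{(t)} = a_{l,j}^{(t+p)} \geq a_{l,k}^{(r')} \cdot a_{k,j}^{(t+r)} = a_{k,j}^{(t+r)},
\]
completing (i). Part (ii) is then an immediate corollary: the identity $A_{k \cdot}^{\otimes(t+r)} = A_{l \cdot}^{\otimes t}$ for $t \geq T_l$ transfers the eventual periodicity of row $l$ to row $k$ with a time shift of $r$, so $T_k \leq T_l + r$.

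The delicate step is the reverse inequality in (i), specifically the claim that $p = r + r'$ serves as a period of row $l$ starting already at time $T_l$, rather than at some potentially larger time; the ``non-decreasing plus periodic implies constant'' trick handles this cleanly and avoids having to appeal to the deeper structural fact (cf.\ Remark~\ref{r:genperiod}) that the minimal eventual period of a critical row equals the cyclicity of its component in $\crit(A)$.
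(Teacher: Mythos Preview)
Your proof is correct and follows essentially the same route as the paper's. Both arguments produce a return critical walk $l\to k$ of some length $r'$ (the paper calls it $s$), sandwich $a_{k,j}^{(t+r)}$ between $a_{l,j}^{(t)}$ and $a_{l,j}^{(t+r+r')}$, and then use eventual periodicity of row $l$ to collapse the sandwich; the only cosmetic difference is that you isolate the step ``$r+r'$ is already a period of row $l$ from time $T_l$'' via the non-decreasing-plus-periodic observation, whereas the paper iterates the sandwich $m$ times and then chooses $m$ to be an eventual period of row $l$.
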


\begin{remark}\label{r:remHperiod}
{\rm Since none of the above lemmas assume the irreducibility
of~$A$, they imply the eventual periodicity of all rows and columns
with critical indices in the reducible case. Moreover, they show
that the cyclicity~$\gamma(H)$ of a strongly connected component~$H$
of~$\crit(A)$ is an eventual period for the~$A^{\otimes t}_{k\cdot}$
for any~$k\in H$. It is the least period because when~$A$ is
strictly visualized, $a_{k,k}^{(t)}$ takes the value~$1$ with least
eventual period~$\gamma(H)$. }
\end{remark}

\begin{proof}[Proof of Lemma~\ref{lem:trans:walk}]

Since $A$ is assumed to be visualized with~$\lambda(A)=1$, the existence of the walk with critical edges exactly means~$a_{k,l}^{(r)}=1$.

Since each edge of $\crit(A)$ belongs to a cycle of $\crit(A)$, there is a walk from~$l$ to~$k$ with critical edges.
Let~$s\ge 1$ be its length. We have~$a_{l,k}^{(s)}=1$.

Thus, we have
$$a_{l,j}^{(t+r+s)} \geq a_{l,k}^{(s)}.a_{k,j}^{(t+r)} = a_{k,j}^{(t+r)} \geq a_{k,l}^{(r)}.a^{(t)}_{l,j}=a^{(t)}_{l,j}$$ for any~$t$.

Iterating the inequality, we see that
\begin{equation}\label{eq:lem:trans:walk}
a_{l,j}^{(t+p(r+s))} \geq a_{k,j}^{(t+r)} \ge a^{(t)}_{l,j}
\end{equation}
for all~$t$ and~$p$.

If $p$ is an eventual period of~$a^{(t)}_{l,j}$ and $t\ge T_l$, the first and the last entry of~\eqref{eq:lem:trans:walk} are equal, so all inequalities of~\eqref{eq:lem:trans:walk} are equalities.

It means that the sequences
$\left(a^{(t)}_{k,j}\right)_{t\ge T_l+r},\quad\left(a^{(t-r)}_{l,j}\right)_{t\ge T_l+r},\quad\left(a^{(t)}_{l,j}\right)_{t\ge T_l}$
are identical. Since the last sequence is periodic, both parts of the lemma are proved.
\end{proof}

\begin{proof}[Proof of Dulmage-Mendelsohn bound]
Let~$\cycle$ be a cycle in~$H$ of length $\ell(\cycle) = g(H)$. By
Lemma~\ref{lem:nacht}, $T_k\leq (n-1) \cdot g(H)$ for all
nodes~$k$ of~$\cycle$.

Let now~$k$ be any node in~$H$. There exist walks in~$H$ from~$k$
to~$\cycle$ of length at most~$\lvert H \rvert -
g(H)$. Application of Lemma~\ref{lem:trans:walk} now
concludes the proof.
\end{proof}

\section{Proof of Kim's Bound}\label{s:Kim}
Set $B=A^{\otimes d}$. The cyclicity classes of~$\digr(A)$ are
the strongly connected components  of~$\digr(B)$,
and $\digr(B)$ is completely
reducible, i.e. it has no edge between two different strongly connected
components.

It means that, up to reordering the indices, $B$ is block-diagonal.
As in Kim~\cite{Kim-79}, we want to apply the previous bound to the smallest blocks.
Then, we will get the general bound thanks to Lemmas~\ref{lem:trans:multiples} to~\ref{lem:trans:walk}.

Obviously, any cycle in~$\digr(A)$ has to go through every cyclicity class.
Thus, $d$ divides $g(H)$ and if $k$ belongs to~$H$ then  the girth of its
strongly connected components in~$\crit(B)$ is at most $g(H)/d$.

Call a cyclicity class of~$\digr(A)$ {\em small\/} if it contains the
minimal number of nodes amongst cyclicity classes. Let~$m$ be the
number of nodes in any small class.

We apply the Dulmage-Mendelsohn bound to the nodes of small classes and then use
Lemma~\ref{lem:trans:walk} to extend it to all other critical nodes.

We distinguish the cases (A) $m\leq \lfloor n/d\rfloor -1$ and (B)
$m= \lfloor n/d\rfloor$. Note that $m\geq \lfloor n/d\rfloor+1$ is
not possible.

In case (B), there are at least $d-(n\bmod d)$ small classes because
otherwise the sum of sizes of cyclicity classes~$C$ would satisfy
\begin{equation}\label{e:small:classes}
\begin{split}
n =\sum_C \lvert C\rvert & > \big(d-(n\bmod d)\big) \cdot \lfloor n/d\rfloor +
(n\bmod d)
\cdot \big( \lfloor n/d \rfloor + 1 \big)
\\
& = d\cdot \lfloor n/d \rfloor + (n\bmod d) = n
\enspace,
\end{split}
\end{equation}
a contradiction. Hence every critical node is connected to a small class by a path consisting of
critical edges of length at most~$(n\bmod d)$.

Let us first prove that
\begin{equation}\label{eq:kim1}
T_k\le \big(\lfloor n/d\rfloor - 2\big) \cdot g(H) + n.
\end{equation}

In both cases (A) and (B), by the max-algebraic extension of  Dulmage and Mendelsohn's bound,
we have $T_k(B)\leq (m-2)\cdot g(H)/d + m$
for each critical node~$k$ of~$H$ in a small class.
Lemma~\ref{lem:trans:multiples} then
implies $T_k(A)\leq (m-2) \cdot g(H) + d\cdot m$ for all critical
nodes~$k$ of~$H$ in small classes.

In case (A), a crude estimation for all critical~$k$ in small
classes is
\[
T_k\leq
(m-1) \cdot g(H) + d\cdot m \leq  \big(\lfloor n/d\rfloor - 2\big)
\cdot g(H)
+ n - d
\enspace.
\]
Because every critical node has paths consisting of critical edges
to a small class of length at most~$d-1$, \eqref{eq:kim1}
follows from Lemma~\ref{lem:trans:walk} in case (A).

In case (B), there is a path of length at
most~$(n\bmod d)$ consisting of
critical edges to a small class. Hence, again by Lemma~\ref{lem:trans:walk},
\begin{align*}
T_k & \leq \big(\lfloor n/d\rfloor - 2\big) \cdot g(H) + d\cdot \lfloor
n/d\rfloor  + (n \bmod d)
\\
& = \big(\lfloor n/d\rfloor - 2\big) \cdot g(H) + n
\enspace.
\end{align*}
This concludes the proof of~\eqref{eq:kim1}.

It remains to prove
\begin{equation}\label{eq:kim2}
T_k(A)\le \big(\lfloor n/d\rfloor - 2\big) \cdot g(H) + \lvert H\rvert +
(n\bmod d).
\end{equation}
This is based on the following lemma, which we prove using the same case
distinction.

\begin{lemma}\label{l:kim2}
 If $k$ is in a critical closed walk~$\cycle$, then
\begin{equation}\label{e:l:kim2}
 T_k(A)\le (\lfloor n/d\rfloor -1)\ell(C) + (n \bmod d),
\end{equation}
\end{lemma}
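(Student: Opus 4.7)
My plan is to reduce to the block-diagonal matrix $B=A^{\otimes d}$, apply Nachtigall's bound (Lemma~\ref{lem:nacht}) inside a small block of $B$, and then transfer the resulting bound back to $A$ via Lemmas~\ref{lem:trans:multiples} and~\ref{lem:trans:walk}. As throughout this section, I would assume $\lambda(A)=1$ and that $A$ is visualized.

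First I would use that, since $\digr(A)$ is strongly connected of cyclicity $d$, every closed walk has length a multiple of $d$, so $c:=\ell(C)/d$ is a positive integer. Since the cyclicity classes of $\digr(A)$ are exactly the strongly connected components of $\digr(B)$, traversing $C$ in blocks of $d$ consecutive steps shows that for each node $j$ on $C$ there is a critical closed walk at $j$ of length $c$ in $\digr(B)$, lying entirely inside the cyclicity class of $j$.

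The crux is then to choose a node $l$ on $C$ in a small cyclicity class that is close to $k$ along $C$. Along $C$ the cyclicity classes are visited in strict cyclic order, so any $d$ consecutive positions exhaust all $d$ classes. In case (A), where $m\le\lfloor n/d\rfloor-1$, I can therefore pick $l$ with the subwalk $k\to l$ along $C$ of length $r\le d-1$. In case (B), where $m=\lfloor n/d\rfloor$, the counting in~\eqref{e:small:classes} gives at least $d-(n\bmod d)$ small classes, so the longest run of big classes following $[k]$ in the cyclic order has length at most $n\bmod d$, and I can pick $l$ with $r\le n\bmod d$.

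Now Lemma~\ref{lem:nacht}, applied to $B$ restricted to the block containing $l$ (which has $m$ nodes and contains a critical closed walk of length $c$ at $l$; the entries of $B^{\otimes t}$ along row $l$ outside this block are zero and hence trivially periodic), yields $T_l(B)\le (m-1)c$. Lemma~\ref{lem:trans:multiples} then gives $T_l(A)\le d\cdot T_l(B)\le (m-1)\ell(C)$, and Lemma~\ref{lem:trans:walk}(ii) upgrades this to $T_k(A)\le T_l(A)+r\le (m-1)\ell(C)+r$. In case (B) this is exactly $(\lfloor n/d\rfloor-1)\ell(C)+(n\bmod d)$; in case (A) it is at most $(\lfloor n/d\rfloor-2)\ell(C)+(d-1)$, which is no larger than $(\lfloor n/d\rfloor-1)\ell(C)+(n\bmod d)$ since $\ell(C)\ge d$. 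The main potential obstacle I foresee is justifying cleanly that the block-wise application of Lemma~\ref{lem:nacht} is a genuine bound on $T_l(B)$ rather than just on a standalone submatrix transient; this should follow immediately from the block-diagonal structure of $B$ and the fact that Nachtigall's argument depends only on the size of the strongly connected component containing $l$.
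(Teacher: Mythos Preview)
Your proof is correct and follows essentially the same route as the paper's: reduce to $B=A^{\otimes d}$, apply Lemma~\ref{lem:nacht} in a small block, lift back with Lemma~\ref{lem:trans:multiples}, and then reach $k$ along $C$ via Lemma~\ref{lem:trans:walk}, splitting into cases (A) and (B). The only difference is cosmetic: in case~(A) the paper bounds the length of the critical subwalk from $k$ to a small-class node on $C$ by $\ell(C)-1$, whereas you use the sharper bound $d-1$ (exploiting that consecutive positions on $C$ cycle through the cyclicity classes); both intermediate estimates suffice for~\eqref{e:l:kim2}. Your explicit remark about why the block-wise use of Lemma~\ref{lem:nacht} really bounds $T_l(B)$ (zero off-block entries are trivially periodic) is a point the paper leaves implicit.
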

\begin{proof}
Let us first notice that~$k$ is in a critical closed walk of~$A^{\otimes d}$ with length~$\ell(C)/d$.

If $k$ is in a small class, then
$T_k(A^{\otimes d}) \le (m -1)\ell(C)/d$ by Lemma~\ref{lem:nacht}, thus $T_k(A)\le (m -1)\ell(C)$ by Lemma~\ref{lem:trans:multiples}.

If $k$ is in~$\cycle$ but not necessarily in a small class, we distinguish between cases (A)~and~(B) and apply Lemma~\ref{lem:trans:walk}.
In case~(A), $m\le \lfloor n/d\rfloor -1$. Recall that $C$ contains representatives of all cyclicity classes
(and the small classes, too), and therefore, each node $k\in C$ can be connected to
a node of $C$ in a small class by a subpath of $C$, with length at most $\ell(C)-1$.
Applying Lemma~\ref{lem:trans:walk} we get $$T_k(A)\le (\lfloor n/d\rfloor -2)\ell(C) + \ell(C)-1=(\lfloor n/d\rfloor -1)\ell(C) -1$$
which implies~\eqref{e:l:kim2}.
In case~(B), $m= \lfloor n/d\rfloor$ and there is a path from~$k$ to a node of~$\cycle$ in a small class with length at most~$(n\bmod d)$,
so we get exactly~\eqref{e:l:kim2}.
\end{proof}

To conclude the proof of~\eqref{eq:kim2}, we apply the Lemma~\ref{l:kim2} to a cycle~$C$ with length~$g(H)$
and notice that for any~$k\in H$, there is a critical path from~$k$
to~$\cycle$ with length at most $\lvert H\rvert-g(H)$.
By Lemma~\ref{lem:trans:walk}, it implies

\begin{align*}
 T_k(A) &\le (\lfloor n/d\rfloor -1)g(H) +  (n \bmod d) + \lvert H\rvert-g(H)\\
&=(\lfloor n/d\rfloor -2)g(H)+\lvert H\rvert+ (n \bmod d)
\end{align*}

and~\eqref{eq:kim2} is proved.

\section{Proof of Wielandt's Bound}
\label{s:wielandt}

If $g(H)\leq n-1$, then Wielandt's bound follows from the
Dulmage-Mendelsohn bound. It remains to treat the case that
$g(H)=n$, i.e. $\crit(A)$ is a Hamiltonian cycle. We therefore prove
a result on cycle removal and insertion
(Theorem~\ref{PumpingLemL=n}) which implies the Wielandt bound for
matrices with a critical Hamiltonian cycle
(Corollary~\ref{c:WilHamil}).

\subsection{Cycle Replacement with a Hamiltonian Cycle}

We recall the following elementary application of the pigeonhole
principle.

\begin{lemma}
\label{l:NT} Let $x_1,\ldots, x_n$ be integers.
There exists a nonempty subset~$I$ of $\{1,\dots,n\}$
such that $\sum_{i\in I} x_i$ is a
multiple of $n$.
\end{lemma}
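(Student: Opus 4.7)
The plan is the standard pigeonhole argument on partial sums, which is the classical proof of this well-known fact (sometimes attributed to Erd\H{o}s and Ginzburg-Ziv's circle of ideas, though the statement itself is much older and simpler).

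First I would form the $n$ partial sums $s_k = x_1 + x_2 + \cdots + x_k$ for $k = 1, \ldots, n$, and consider their residues modulo $n$. There are two cases depending on whether one of these residues hits $0$.

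If some $s_k \equiv 0 \pmod n$, then the set $I = \{1, \ldots, k\}$ does the job directly. Otherwise, the $n$ residues $s_1, \ldots, s_n$ all lie in the set $\{1, 2, \ldots, n-1\}$ of size $n-1$. By the pigeonhole principle, two of them must coincide, say $s_j \equiv s_k \pmod n$ with $j < k$. Then the difference
\[
s_k - s_j = x_{j+1} + x_{j+2} + \cdots + x_k
\]
is a multiple of $n$, and the nonempty set $I = \{j+1, \ldots, k\} \subseteq \{1, \ldots, n\}$ satisfies the required property.

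There is no real obstacle here: the only thing to be mindful of is to handle the two cases (some partial sum is already $\equiv 0$ versus all partial sums are nonzero modulo $n$) so that the resulting set $I$ is genuinely nonempty. The argument is self-contained and uses nothing beyond pigeonhole.
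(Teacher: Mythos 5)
Your proof is correct and follows exactly the same argument as the paper: form the $n$ partial sums $s_k=\sum_{i=1}^k x_i$, and either one is divisible by $n$ or two agree modulo $n$, in which case their difference gives the desired nonempty subset. Nothing to add.
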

\begin{proof}
Either one of the~$n$ sums $\sum_{i=1}^k x_i$ with $1\leq k\leq n$ is a
multiple of~$n$, or two of these sums are in the same congruence class
modulo~$n$.
\end{proof}

We will use this lemma to prove:

\begin{theorem}\label{PumpingLemL=n}
Let $\digr$ be a digraph with $n$ nodes. For any Hamiltonian
cycle~$\cycle_H$ in $\digr$ and any walk $W$, there is a walk~$V$ that
has the same start and end node as $W$, is formed by removing cycles from
$W$ and possibly inserting copies of $\cycle_H$, and has a length
satisfying $(n-1)^2+1\leq \ell(V)\leq (n-1)^2+n$ and $\ell(V)\equiv \ell(W)
\pmod n$.
\end{theorem}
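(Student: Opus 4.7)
The plan is to decompose $W$ into a simple path plus simple cycles and then exchange some of those cycles for copies of $\cycle_H$ so as to hit a target length. Because $\ell(\cycle_H)=n$, every inserted $\cycle_H$ contributes a multiple of $n$, so preserving $\ell(V)\equiv\ell(W)\pmod n$ forces the removed cycles to have total length divisible by $n$ as well. The target $\ell(V)$ is therefore the unique integer $L^*\in[(n-1)^2+1,(n-1)^2+n]$ with $L^*\equiv\ell(W)\pmod n$.

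I would first decompose $W$ by iteratively locating a shortest sub-walk $(v_a,v_{a+1},\ldots,v_b)$ with $v_a=v_b$ and peeling it off. Minimality guarantees each peeled piece is a simple cycle $D_k$ of length $d_k\leq n$, and after finitely many iterations the residual walk is a simple path $P$ from the start to the end of $W$ with $\ell(P)\leq n-1$. Thus $W = P + D_1+\cdots+D_m$ with each $d_k\in[1,n]$. Constructing $V$ then reduces to finding a subset $R\subseteq\{1,\ldots,m\}$ with $n\mid\sum_{k\in R}d_k$ and $\sum_{k\in R}d_k\geq\ell(W)-L^*$; removing those cycles from $W$ and inserting $(L^*-\ell(W)+\sum_{k\in R}d_k)/n$ copies of $\cycle_H$ at the start node (valid because $\cycle_H$ is Hamiltonian and visits every node) then produces $V$ of length exactly $L^*$.

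I build $R$ greedily, enlarging it while $\ell(W_R)>(n-1)^2+n$, where $W_R$ denotes $W$ minus the cycles currently in $R$. In that regime the $m'=m-|R|$ remaining cycles have total length exceeding $(n-1)^2+1$, and since $d_k\leq n$ this forces $m'\geq n-1$. When $m'\geq n$, Lemma~\ref{l:NT} applied to any $n$ of the remaining cycles yields a non-empty subset with length-sum divisible by $n$, which is adjoined to $R$. In the borderline case $m'=n-1$, the average remaining length strictly exceeds $n-1$, so some $d_k$ must equal $n$; that single cycle alone is adjoined to $R$. Each iteration strictly decreases $\ell(W_R)$, so the process terminates.

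At termination $\ell(W_R)\leq(n-1)^2+n$ and $\ell(W_R)\equiv L^*\pmod n$; since $L^*$ is the unique integer of its residue in the target interval, $\ell(W_R)\leq L^*$, and inserting $(L^*-\ell(W_R))/n$ copies of $\cycle_H$ at the start node completes the construction. The main obstacle I anticipate is the borderline case $m'=n-1$ where Lemma~\ref{l:NT} just fails (it needs $n$ integers); the constraint $d_k\leq n$ inherited from the $n$-node graph is precisely what salvages the argument by forcing a Hamiltonian-length cycle to appear among the remaining $D_k$.
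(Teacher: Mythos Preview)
Your overall plan---decompose $W$ into a simple path $P$ plus simple cycles $D_1,\dots,D_m$, discard batches of cycles whose total length is divisible by $n$, then pad with copies of $\cycle_H$---is the paper's strategy. The gap is in the reconstruction step.

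You treat $W_R$ as a walk obtained by ``removing those cycles from $W$,'' but for an arbitrary subset $R$ this is not a well-defined walk. Each $D_k$ is a contiguous subwalk only of the intermediate $W_{k-1}$, not of $W$ itself; equivalently, the $D_k$ have a nested insertion structure rooted at $P$. The retained cycles $\{D_k:k\notin R\}$ need not all attach (even iteratively) to $P$: a retained $D_k$ may share nodes only with discarded cycles $D_j$, $j\in R$. When this happens and simultaneously $\ell(W_R)=L^*$, you insert zero copies of $\cycle_H$, so there is nothing to bridge the disconnection; and inserting one extra $\cycle_H$ to restore connectivity overshoots the target window $[(n-1)^2+1,(n-1)^2+n]$ by~$n$.

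The paper closes this gap by running the removal to exhaustion---until no nonempty sub-collection of the surviving cycles has total length divisible by $n$. This forces the surviving family $\mathcal B$ to satisfy $\lvert\mathcal B\rvert\le n-1$ (by Lemma~\ref{l:NT}) and $\ell(C)\le n-1$ for every $C\in\mathcal B$ (a length-$n$ cycle is removable on its own as a singleton). The walk $V$ is then built bottom-up from $P$: if every $C\in\mathcal B$ meets $P$, direct insertion gives length at most $(n-1)+(n-1)^2$; otherwise some $\hat C\in\mathcal B$ is node-disjoint from $P$, whence $\ell(P)+\ell(\hat C)\le n-1$, and this saving exactly pays for inserting one copy of $\cycle_H$ first (after which every node is present and all of $\mathcal B$ can be attached), keeping the length at most $(n-1)^2+n$. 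Your early stopping at $\ell(W_R)\le(n-1)^2+n$ retains neither the cardinality bound on the surviving cycles nor the disjointness saving, so the length budget cannot absorb an extra $\cycle_H$ when one is genuinely needed.
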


\begin{corollary}\label{c:WilHamil}
If $A\in\Rpnn$ has a critical Hamiltonian cycle and~$\lambda(A)=1$, then the transient of~$A$ is at most~$\wiel(n)$.
\end{corollary}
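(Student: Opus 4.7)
The plan is to show, using Theorem~\ref{PumpingLemL=n}, that $n$ is an eventual period of the sequence $(A^{\otimes t})$ starting from time $\wiel(n)$; in other words, that $a^{(t+n)}_{i,j}=a^{(t)}_{i,j}$ for every $i,j\in N$ and every $t\geq \wiel(n)=(n-1)^2+1$. Since the corollary makes the standing assumption $\lambda(A)=1$, Remark~\ref{r:visualize} lets me further assume $A$ is visualized, so that every edge has weight at most~$1$ while every edge of the critical Hamiltonian cycle~$\cycle_H$ has weight exactly~$1$; in particular $p(\cycle_H)=1$.

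The direction $a^{(t+n)}_{i,j}\geq a^{(t)}_{i,j}$ is immediate: because $\cycle_H$ is Hamiltonian it passes through~$i$, so prepending a copy of $\cycle_H$ to an optimal walk of length~$t$ from $i$ to $j$ yields a walk of length~$t+n$ from $i$ to $j$ of the same weight. For the reverse inequality I take an optimal walk~$W$ of length~$t+n$ from $i$ to $j$ and apply Theorem~\ref{PumpingLemL=n} to obtain a walk~$V$ from $i$ to $j$ with $(n-1)^2+1 \leq \ell(V) \leq (n-1)^2+n$ and $\ell(V)\equiv\ell(W)\equiv t\pmod n$. The walk~$V$ is produced from~$W$ by removing cycles---each of weight at most $\lambda(A)^{\ell}=1$---and inserting copies of~$\cycle_H$---each of weight exactly~$1$---so $p(V)\geq p(W)=a^{(t+n)}_{i,j}$.

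It remains to transform~$V$ into a walk of length exactly~$t$ without decreasing its weight. The only bookkeeping is the length check: $\ell(V)\leq (n-1)^2+n=\wiel(n)+(n-1)\leq t+(n-1)$, and together with $\ell(V)\equiv t\pmod n$ this forces $\ell(V)\leq t$, so $t-\ell(V)$ is a nonnegative multiple of~$n$. I then insert $(t-\ell(V))/n$ copies of~$\cycle_H$ at some node of~$V$---possible because every node lies on~$\cycle_H$---to obtain a walk of length~$t$ from~$i$ to~$j$ of weight $p(V)$. Hence $a^{(t)}_{i,j}\geq p(V)\geq a^{(t+n)}_{i,j}$, and periodicity with period~$n$ starts no later than~$\wiel(n)$. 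No real obstacle arises once Theorem~\ref{PumpingLemL=n} is in hand; the only subtle point is the $\ell(V)\leq t$ check, for which the upper end $(n-1)^2+n$ of the length window is exactly tight against the assumption $t\geq (n-1)^2+1$.
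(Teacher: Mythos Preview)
Your proof is correct and follows essentially the same approach as the paper: apply Theorem~\ref{PumpingLemL=n} to an optimal walk, observe that cycle removal and $\cycle_H$-insertion cannot decrease weight when $\lambda(A)=1$, and use that every node lies on $\cycle_H$ to pad with copies of weight~$1$. The only cosmetic difference is that the paper compares $a_{i,j}^{(t)}$ with $a_{i,j}^{(s(t))}$ for a canonical representative $s(t)\in[\wiel(n),\wiel(n)+n-1]$, whereas you compare $a_{i,j}^{(t)}$ directly with $a_{i,j}^{(t+n)}$; your length check $\ell(V)\leq t$ plays the role of the paper's observation $t-s(t)=rn\geq 0$.
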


\begin{proof}[Proof of Theorem~\ref{PumpingLemL=n}]
$W$ can be decomposed into a path $P$ and a collection~$\mathcal{C}$ of cycles.
Note that~$P$ is empty when the start and the end nodes of $W$ are the same.

Let~$\mathcal{B}$ be a result of recursively removing from~$\mathcal{C}$ sets of cycles whose combined length is a multiple
of~$n$. By Lemma~\ref{l:NT}, $\lvert \mathcal{B}\rvert  \leq n-1$. Also,
$\ell(C)\leq n-1$ for all $C\in \mathcal{B}$.

Let us build the walk~$V$ as follows. If~$P$ intersects all cycles of~$\mathcal{B}$ [Case (C)], then we successively insert all such cycles in~$P$.
Otherwise [Case (D)], we first insert~$\cycle_H$ into~$P$, getting~$\tilde{P}$ and then insert all cycles of~$\mathcal{B}$ into~$\tilde{P}$.

In case~(C), we have
\begin{align*}
\ell(V) & = \ell(P) + \sum_{\alpha\in R} \ell(C_\alpha)
\\
& \leq (n-1) + (n-1)\cdot (n-1)
\\
& < (n-1)^2 + n
\end{align*}

In case~(D), there exists some $\hat{C}\in\mathcal{B}$ such that $\ell(P) + \ell(\hat{C}) \leq n-1$, so that
\begin{align*}
\ell(V) & = \ell(\cycle_H) + \ell(P) + \ell(\hat{C}) + \sum_{\substack{C\in\mathcal{B}\\ C \neq \hat{C}}} \ell(C)
\\
& \leq n + (n - 1) + (n-1)\cdot (n-2)
\\
& = (n-1)^2 + n
\end{align*}

Moreover, $\ell(V) \equiv \ell(W) \pmod n$ by construction in both cases.

This concludes the proof, because if $V$ is too short, we just insert copies of~$\cycle_H$ into it.
\end{proof}

\subsection{Proof of  Wielandt's Bound with a Critical Hamiltonian Cycle}

In this section, we prove Corollary~\ref{c:WilHamil}.
Because the critical graph contains a Hamiltonian cycle, it is strongly connected and $n$ is an
eventual period of~$A^{\otimes t}$.

Let $i,j\in N$ and let~$t\geq \wiel(n)$.
We show that $a_{i,j}^{(t)} = a_{i,j}^{(s(t))}$ where $s(t) = \wiel(n) +
\big((t - \wiel(n)) \bmod n\big)$.
Because $s(t) = s(t')$ whenever $t\equiv t' \pmod n$, this suffices for the
proof.

If~$t=s(t)$ the result is obvious.
Otherwise, let~$W$ be a maximum weight walk of length~$t$ from~$i$ to~$j$, i.e. $p(W) =
a_{i,j}^{(t)}$.
We apply Theorem~\ref{PumpingLemL=n} to walk~$W$ and the critical Hamiltonian
cycle~$\cycle_H$.
By Theorem~\ref{PumpingLemL=n}, there is a walk $V$
from $i$ to $j$, obtained from $W$ by deleting some cycles and
possible inserting copies of $\cycle$, with
length satisfying $\wiel(n)\leq \ell(V)\leq \wiel(n)+n-1$ and $\ell(V)\equiv
\ell(W)\pmod n$.
In other words, $\ell(V) = s(t)$.
Because we assume $\lambda(A)=1$, the weight of~$V$ satisfies $p(V)\geq p(W)$
and hence
\begin{equation}\label{e:wiel:proof:1}
a_{i,j}^{(s(t))} \geq p(V) \geq p(W) = a_{i,j}^{(t)}
\enspace.
\end{equation}
Since $t\geq \wiel(n)$ and $s(t)\equiv t \pmod n$, there exists some
$r\geq0$ such that $t-s(t) = r\cdot n$.
Hence
\begin{equation}\label{e:wiel:proof:2}
a_{i,j}^{(t)} \geq a_{i,j}^{(s(t))} \cdot a_{j,j}^{(r\cdot n)} =
 a_{i,j}^{(s(t))}
\end{equation}
because $a_{j,j}^{(n)} = 1$.
Combination of \eqref{e:wiel:proof:1} and \eqref{e:wiel:proof:2} concludes the
proof.

\begin{remark}
\label{r:HA}
{\rm To our knowledge, the results of this section are new. However,  let us remark that the method
of cycle replacement using Lemma~\ref{l:NT} was invented by Hartmann and Arguelles~\cite{HA-99}, who also
used it to derive (less precise) transience bounds for sequences of optimal walks.}
\end{remark}

\section{Proof of Schwarz's Bound}
\label{s:Sch}
Schwarz's bound is derived from Wielandt's bound as Kim's bound is derived from Dulmage-Mendelsohn's bound in Section~\ref{s:Kim}.

Again, call a cyclicity class of~$\digr(A)$ {\em small\/} if it contains the
minimal number of nodes amongst cyclicity classes. Let~$m$ be the
number of nodes in any small class and set $B=A^{\otimes d}$.

For each critical node~$k$ in a small cyclicity class,
we have
$T_k(B)\leq \wiel( m)$ by Wielandt's bound.
Lemma~\ref{lem:trans:multiples} hence implies $T_k(A)\leq d
\cdot \wiel(m )$ for all critical nodes~$k$ in small classes.

We distinguish the cases (A) $m\leq \lfloor n/d\rfloor -1$ and (B)
$m= \lfloor n/d\rfloor$. Note that $m\geq \lfloor n/d\rfloor+1$ is
not possible.

In case (A), a crude estimation for all critical~$k$ in small
classes is
\[
T_k\leq d\cdot\wiel(m) \leq d\cdot\wiel\big(\lfloor n/d\rfloor\big) - d
\enspace.
\]
Observe that each critical node can be connected to a small class, by a path of length
at most $d-1$ consisting of critical edges only. So in case (A), the theorem just
follows from Lemma~\ref{lem:trans:walk}.

In case (B), there are at least $d-(n\bmod d)$ small classes because
otherwise~\eqref{e:small:classes} yields a contradiction.
In this case, each critical node can be connected to
a node from a small class by a path consisting only of critical
edges, of length at
most~$(n\bmod d)$.
Hence, by Lemma~\ref{lem:trans:walk},
\begin{align*}
T_k \leq d\cdot \wiel\big( \lfloor n/d\rfloor \big) + (n\bmod d)
\enspace.
\end{align*}
This concludes the proof.

\section{Proof of the Bounds Involving the Factor Rank}\label{s:rank}

In this section, we prove Main Theorem~\ref{t:rank}.
Let $v_{\alpha}, w_{\alpha}\in\Rp^n$, for $\alpha=1,\ldots,r$, be the vectors in factor rank
representation~\eqref{e:rank}. Further, Let $V$ and $W$ be the $n\times r$ matrices whose columns are vectors
$v_{\alpha}$ and $w_{\alpha}$ for $\alpha=1,\ldots,r$,
and consider the $(n+r)\times(n+r)$ matrix~$Z$ defined by
\begin{equation}
\label{e:zdef}
Z=
\begin{pmatrix}
0_{n\times n} & V\\
W^T & 0_{r\times r}
\end{pmatrix},
\end{equation}


Then we have
\begin{equation}
\label{e:z-a-b}
Z^{\otimes 2}=
\begin{pmatrix}
A & 0_{n\times r}\\
0_{r\times n} & B
\end{pmatrix},
\end{equation}
where the $r\times r$ matrix $B$ is given by
\begin{equation}
\label{e:bdef}
b_{\alpha,\beta} = \bigoplus_{i=1}^n w_{\alpha,i} \cdot
v_{\beta,i}, \  \text{for $\alpha,\beta=1,\ldots,r$}.
\end{equation}

We will apply the bounds of Main Theorem~\ref{t:mainres} to the critical nodes
of~$B$ and transfer the result to the critical nodes of~$A$, thanks to the following observation.

\begin{lemma}\label{l:transfer}
If~$(k,n+\beta)$ is an edge of~$\crit(Z)$, then $T_k(A)\le T_\beta(B)+1$.
\end{lemma}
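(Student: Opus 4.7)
After passing to a visualized form of $Z$ (available by Remark~\ref{r:visualize}) and rescaling, we may assume $\lambda(Z)=\lambda(A)=\lambda(B)=1$ and $v_{k,\beta}=1$. The node set of $Z$ splits into $N_A=\{1,\dots,n\}$ and $N_B=\{n+1,\dots,n+r\}$; the diagonal blocks of $Z$ vanish, so every walk in $\digr(Z)$ alternates between these sides and every cycle of $\digr(Z)$ has even length. The critical cycle through $(k,n+\beta)$ thus has length $2q$ for some $q\ge 1$, and under $Z^{\otimes 2}=\diag(A,B)$ together with Lemma~\ref{l:CAk}, it projects to a critical cycle of $\digr(B)$ through $\beta$ of length $q$. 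Hence $q$ is a multiple of the cyclicity of the critical component of $B$ at $\beta$, which by Remark~\ref{r:remHperiod} is the least eventual period of the $\beta$th row of $B^{\otimes t}$. From $A=V\otimes W^T$ and $B=W^T\otimes V$ one derives $A^{\otimes (t+1)}=V\otimes B^{\otimes t}\otimes W^T$ by induction, so
\[
a_{k,j}^{(t+1)} \;=\; \max_{\gamma,\delta}\,v_{k,\gamma}\,b_{\gamma,\delta}^{(t)}\,w_{\delta,j}\qquad(t\ge 0).
\]

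\noindent\textbf{Reducing the outer max.} I plan to show that for every $t\ge T_\beta(B)$,
\[
\max_\gamma\,v_{k,\gamma}\,b_{\gamma,\delta}^{(t)} \;=\; b_{\beta,\delta}^{(t)}.
\]
The $\ge$ direction is immediate from $v_{k,\beta}=1$. For $\le$ I work at the walk level in $\digr(Z)$: the critical cycle traversed forward from $n+\beta$ back to $k$, i.e.\ omitting the edge $(k,n+\beta)$, is a walk of length $2q-1$ using only critical edges, hence of weight $\lambda(Z)^{2q-1}=1$. Prepending it to an arbitrary walk of length $2t+1$ from $k$ to $n+\delta$ in $\digr(Z)$ produces a walk from $n+\beta$ to $n+\delta$ of length $2(t+q)$ with the same weight. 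Maximizing,
\[
\max_\gamma\,v_{k,\gamma}\,b_{\gamma,\delta}^{(t)} \;\le\; b_{\beta,\delta}^{(t+q)},
\]
and for $t\ge T_\beta(B)$, $q$ being an eventual period of the $\beta$th row yields $b_{\beta,\delta}^{(t+q)}=b_{\beta,\delta}^{(t)}$, closing the sandwich.

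\noindent\textbf{Conclusion and main obstacle.} Plugging this equality into the closed form, $a_{k,j}^{(t+1)}=\max_\delta b_{\beta,\delta}^{(t)}\,w_{\delta,j}$ for all $t\ge T_\beta(B)$; the right-hand side inherits periodicity in $t$ from the $\beta$th row of $B^{\otimes t}$. After reindexing, $a_{k,j}^{(s)}$ is periodic for $s\ge T_\beta(B)+1$, yielding $T_k(A)\le T_\beta(B)+1$. The main obstacle is the upper bound in the sandwich: the argument hinges on the bipartite structure of $Z$ forcing the critical cycle through $(k,n+\beta)$ to have even length $2q$ with $q$ a cycle length through $\beta$ in $\crit(B)$, which is exactly what allows the excess $q$ to be absorbed into the periodic regime of that row.
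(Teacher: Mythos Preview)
Your proof is correct, but it takes a substantially different route from the paper's. The paper dispatches the lemma in three lines by staying at the level of row transients and recycling earlier machinery: from the block structure $Z^{\otimes 2}=\diag(A,B)$ and Lemma~\ref{lem:trans:multiples} it gets $T_{n+\beta}(Z)\le 2T_\beta(B)$; the critical edge $(k,n+\beta)$ and Lemma~\ref{lem:trans:walk} then give $T_k(Z)\le 2T_\beta(B)+1$; finally the block structure again yields $T_k(A)\le\lceil T_k(Z)/2\rceil=T_\beta(B)+1$. No explicit walk manipulation, no identification of periods.

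You instead work directly with the factorization identity $A^{\otimes(t+1)}=V\otimes B^{\otimes t}\otimes W^T$ and prove the pointwise statement that the $k$th row of $A^{\otimes(t+1)}$ coincides with $(B^{\otimes t})_{\beta\cdot}\otimes W^T$ once $t\ge T_\beta(B)$. Your sandwich argument is essentially a bespoke instance of Lemma~\ref{lem:trans:walk} carried out inside $\digr(Z)$, combined with the observation that the length-$2q$ critical cycle projects to a closed walk of length $q$ in $\crit(B)$ through~$\beta$ (a closed walk, not necessarily a simple cycle, but that suffices since any closed walk length in a strongly connected graph is a multiple of its cyclicity). What your approach buys is an explicit formula for $a_{k,j}^{(s)}$ in the periodic regime in terms of the $\beta$th row of $B$, which is more information than the bare transient inequality; what it costs is that you re-prove, in specialized form, facts already packaged in Lemmas~\ref{lem:trans:multiples} and~\ref{lem:trans:walk}, making the argument longer and less modular than the paper's.
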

\begin{proof}
By Equation~\eqref{e:z-a-b} and Lemma~\ref{lem:trans:multiples},
$T_{n+\beta}(Z)\leq 2T_{n+\beta}(Z^{\otimes 2})\leq 2T_\beta(B)$, thus $T_k(Z)\leq 2T_\beta(B)+1$ by
Lemma~\ref{lem:trans:walk}.
But Equation~\eqref{e:z-a-b} now implies
$T_k(A)\leq \lceil T_k(Z)/2\rceil  \leq\lceil (2T_\beta(B)+1)/2 \rceil  = T_\beta(B)+1$.
\end{proof}

To use this lemma, we need to study the links between~$\crit(Z)$, $\crit(A)$ and~$\crit(B)$.

The next observation is useful for the case of Kim's and Schwarz's bounds, where $A$
is assumed to be irreducible.
\begin{lemma}\label{l:ABZirred}
If $A$ is irreducible, then so are~$Z$ and~$B$. Moreover~$\digr(B)$ and~$\digr(A)$ have the same cyclicity.
\end{lemma}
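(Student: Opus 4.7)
The key structural observation is that $\digr(Z)$ is bipartite: by~\eqref{e:zdef}, every edge either goes from the ``$A$-side'' $\{1,\dots,n\}$ to the ``$B$-side'' $\{n+1,\dots,n+r\}$ (via $V$) or vice versa (via $W^T$). Consequently, every closed walk in $\digr(Z)$ has even length, and the identity~\eqref{e:z-a-b} translates into a bijection between walks of length $t$ in $\digr(A)$ (resp.\ $\digr(B)$) starting and ending on the $A$-side (resp.\ $B$-side) and walks of length $2t$ in $\digr(Z)$ with the same endpoints. The second main ingredient is that the minimality of the factor rank forces $v_\alpha\neq 0$ and $w_\alpha\neq 0$ for every $\alpha$: otherwise the term $v_\alpha\otimes w_\alpha^T$ vanishes and can be dropped, contradicting minimality. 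I will use this throughout without repeating the justification.

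My plan is to first establish irreducibility of $Z$. Given two $A$-side nodes $i,j$, irreducibility of $A$ yields a walk in $\digr(A)$ from $i$ to $j$, which lifts via the bijection above to a walk in $\digr(Z)$. To pass from an $A$-side node $i$ to a $B$-side node $n+\beta$, pick $j$ with $v_{\beta,j}>0$ (such $j$ exists since $v_\beta\neq 0$), take a walk in $\digr(Z)$ from $i$ to $j$ as before, and append the edge $j\to n+\beta$. The reverse direction is symmetric using the nonvanishing of $w_\alpha$, and the $B$-side-to-$B$-side case follows by composition. Hence $\digr(Z)$ is strongly connected.

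Irreducibility of $B$ is then immediate: given any two $B$-side nodes $n+\alpha$, $n+\beta$, strong connectivity of $\digr(Z)$ provides a walk from $n+\alpha$ to $n+\beta$, which must have even length $2t$ with $t\geq 1$ by the bipartite structure, and the walk bijection converts it into a walk of length $t$ from $\alpha$ to $\beta$ in $\digr(B)$.

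For the cyclicity statement, let $d_A$, $d_B$, $d_Z$ denote the cyclicities of $\digr(A)$, $\digr(B)$, $\digr(Z)$, respectively. In a strongly connected digraph the cyclicity equals the gcd of the lengths of closed walks based at any fixed node. Fixing any $A$-side node $i$, the bijection identifies the set of closed walk lengths at $i$ in $\digr(A)$ with the set of (necessarily even) closed walk lengths at $i$ in $\digr(Z)$, divided by $2$; this gives $d_A = d_Z/2$. Fixing a $B$-side node $n+\alpha$ and repeating the argument yields $d_B = d_Z/2$. Hence $d_A = d_B$, completing the proof. The only mild subtlety I anticipate is keeping the walk-counting bijection straight and remembering to invoke the minimality of the factor rank at the right moments.
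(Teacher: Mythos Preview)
Your argument is correct. The irreducibility of~$Z$ is handled essentially as in the paper: use irreducibility of~$A$ for $A$-side-to-$A$-side walks, and the nonvanishing of the $v_\alpha,w_\alpha$ (from minimality of~$r$) to reach the $B$-side. One small wording issue: what you call a ``bijection'' between walks in~$\digr(A)$ and walks in~$\digr(Z)$ is really only a surjection (the intermediate $B$-side node in each length-$2$ step of a $Z$-walk need not be unique), but your proof only uses the existence correspondence, so nothing breaks.

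Where you differ from the paper is in deducing irreducibility of~$B$ and the equality of cyclicities. The paper invokes Theorem~\ref{t:graphpowers} as a black box: since~$\digr(Z)$ is strongly connected with even cyclicity, $\digr(Z^{\otimes 2})$ has exactly two strongly connected components of equal cyclicity, and~\eqref{e:z-a-b} identifies these with~$\digr(A)$ and~$\digr(B)$. You instead argue directly, computing $d_Z=2d_A$ and $d_Z=2d_B$ from the gcd-of-closed-walk-lengths characterisation of cyclicity. Your route is more self-contained and avoids citing the structure theorem for graph powers; the paper's route is shorter once that theorem is on the table. Both are perfectly fine.
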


\begin{proof}
As $A$ is irreducible, there exists a walk in~$\digr(Z^{\otimes 2})$, and hence in $\digr(Z)$,
between every pair of nodes in $\{1,\dots,n\}$.
None of the vectors $v_\alpha$,
$w_\alpha$ for $\alpha=1,\ldots,r$ is zero by the minimality of~$r$, i.e. every node in
$\{n+1,\dots,n+r\}$ has an incoming and an outgoing neighbor in
$\{1,\dots,n\}$.
Hence there exists a walk between every pair of nodes in~$\digr(Z)$.

By Theorem~\ref{t:graphpowers}, $\digr(Z^{\otimes 2})$ has at most~$2$ strongly connected components with the same cyclicity.
By Equation~\eqref{e:z-a-b}, it has at least~$2$ components, one of them is~$\digr(A)$ and the second one is isomorphic
to~$\digr(B)$, hence these are the two components given by Theorem~\ref{t:graphpowers}.
In particular, $B$ is irreducible and the graphs of~$A$ and~$B$ have the same cyclicity.
\end{proof}



By construction, $\digr(Z)$ is a bipartite graph, so every walk in~$\digr(Z)$
alternates between nodes in $\{1,\dots,n\}$ and nodes
in $\{n+1,\dots,n+r\}$. Figure~\ref{f:z} depicts an example of a walk in~$\digr(Z)$.

\begin{figure}
\centering
\begin{tikzpicture}[>=latex,scale=1.0,very thick]
\node[shape=circle,draw] (i) at (0.0,3.0) {$i$};
\node[shape=circle,draw] (j) at (3.0,3.0) {$j$};
\node[shape=circle,draw] (k) at (6.0,3.0) {$k$};
\node[shape=circle,draw] (l) at (9.0,3.0) {$l$};

\node[shape=circle,draw] (a) at (1.5,0.0) {$\scriptstyle n+\alpha$};
\node[shape=circle,draw] (b) at (4.5,0.0) {$\scriptstyle n+\beta$};
\node[shape=circle,draw] (c) at (7.5,0.0) {$\scriptstyle n+\gamma$};

\draw[->] (i) to node[left] {$v_{\alpha,i}$} (a);
\draw[->] (a) to node[left] {$w_{\alpha,j}$} (j);
\draw[->] (j) to node[left] {$v_{\beta,j}$} (b);
\draw[->] (b) to node[left] {$w_{\beta,k}$} (k);
\draw[->] (k) to node[left] {$v_{\gamma,k}$} (c);
\draw[->] (c) to node[left] {$w_{\gamma,l}$} (l);
\end{tikzpicture}
\caption{A walk in $\digr(Z)$}
\label{f:z}
\end{figure}

\if{
Without loss of generality, we assume that $\lambda(Z)=1$ and~$Z$ is strictly visualized.
Thus, critical edges are exactly the ones with weight~$1$.

Each closed walk of $\crit(Z)$  contains nodes both from $\{1,\ldots,n\}$ and from $\{n+1,\ldots,n+r\}$.
Thus it arises from a closed walk on~$\digr(A)$ and from a closed walk on~$\digr(B)$, all of them have only edges with weight~$1$.
It implies that $\lambda(B)=\lambda(A)=1$ and both closed walks are critical for~$A$ and~$B$ respectively.

Now, every critical edge of~$\crit(A)$ or~$\crit(B)$ has weight~$1$, so it results from a path of $\digr(Z)$ of length $2$ with weight~$1$,
which has only critical edges since~$A$ is strictly visualized. Inserting these paths into a closed walk~$C$ on~$\crit(A)$ or~$\crit(B)$, we obtain a closed
walk of $\crit(Z)$ (see Figure~\ref{f:correspondence}, left).
In $\digr(Z^{\otimes 2})$ this walk splits in two closed walks of $\crit(Z^{\otimes 2})$ with length~$\ell(C)$,
one of them is~$C$ and the otherone~$\tilde{C}$ lives on the other critical graph.

But  components of critical graphs are made of walks. Hence each component $G$ of $\crit(Z)$ splits into two components of $(\crit(Z))^2$ such that the (disjoint) union of
their node sets is exactly the node set of $G$.  Following~\cite{BSST} we call
these two components {\em related}. For a component $H$ of $(\crit(Z))^2$, the related component
will be denoted by $H'$.

If $(H,H')$ is a pair of related components of $(\crit(Z))^2$ then one of them (say, $H$) contains a node in $\{1,\ldots,n\}$ and the other
($H'$) contains a node in $\{n+1,\ldots,n+r\}$. Since there are no edges between the two components of
$\digr(Z^{\otimes 2})$, $H$ is a subgraph of $\digr(A)$ and $H'$ is a subgraph of $\digr(B)$.
Further as $(\crit(Z))^2=\crit(Z^{\otimes 2})$ (by Lemma~\ref{l:CAk}),
$H$ is a component of $\crit(A)$, and $H'$ is a component of $\crit(B)$. Moreover, $\crit(A)$ and $\crit(B)$
do not have components that are not formed this way.
}\fi

\if{
Let us apply Theorems~\ref{t:graph powers},~\ref{t:girth} and~\ref{t:maxpowers} to
$Z$. In our case, $k=2$, and $d$ and $\sigma$ are even, therefore $Z^{\otimes 2}$ is the
direct sum of two irreducible blocks $A$ and $B$ with $\lambda(A)=\lambda(B)$ and the
same cyclicity $d/2$.  Each component of $\crit(Z)$ gets split into
two components of $\crit(Z^{\otimes 2})=\crit(Z)^2$. One of these is a component of $\crit(A)$, and the other
is a component of $\crit(B)$. Following~\cite{BSST}, we call these components {\em related}.
}\fi

\if{
Proposition~\ref{p:powers} can be explained in terms of the correspondence between the closed walks
of $\digr(A)$, $\digr(B)$ and $\digr(Z)$, as depicted on Figure{f:correspondence}.
Observe that if one of these walks is critical, then the two other walks can be made critical as well,
so this is also a correspondence between the critical closed walks.
}\fi

As all closed walks in $\digr(Z)$ are of even length, the cyclicity
of any component of $\crit(Z)$ is even, i.e. it is divisible by two.
Hence each component $G$ of $\crit(Z)$ splits into two components of $(\crit(Z))^2$ such that the (disjoint) union of
their node sets is exactly the node set of $G$
(e.g., apply Theorem~\ref{t:graphpowers} with $k=2$ and even $\sigma$).  Following~\cite{BSST} we call
these two components {\em related}. For a component $H$ of $(\crit(Z))^2$, the related component
will be denoted by $H'$.

Each closed walk
of $\digr(Z)$ and, therefore, each component of $\crit(Z)$ contains
nodes both from $\{1,\ldots,n\}$ and from $\{n+1,\ldots,n+r\}$.
Hence, if $H$ and $H'$ is a pair of
related components of $(\crit(Z))^2$ then one of them (say, $H$) contains a node in $\{1,\ldots,n\}$ and the other
($H'$) contains a node in $\{n+1,\ldots,n+r\}$. Since there are no edges between the two components of
$\digr(Z^{\otimes 2})$, $H$ is a subgraph of $\digr(A)$ and $H'$ is a subgraph of $\digr(B)$.
Further as $(\crit(Z))^2=\crit(Z^{\otimes 2})$ (by Lemma~\ref{l:CAk}),
$H$ and $H'$ are components of $\crit(Z^{\otimes 2})$. As $\crit(Z^{\otimes 2})$ consists of only such
components and the cycles not belonging to such components have a strictly smaller geometric mean, it follows
that $H$ is a component of $\crit(A)$, $H'$ is a component of $\crit(B)$ and, moreover,
$\crit(A)$ and $\crit(B)$ do not have components that are not formed this
way.\footnote{In fact, we also have
$\lambda(A)=\lambda(B)=\lambda(Z^{\otimes 2})=(\lambda(Z))^2$.}

\begin{lemma}
\label{l:related}
Let $H$ be a component of $\crit(A)$.
\begin{itemize}
\item[{\rm (i)}]  $g(H)=g(H')$.
\item[{\rm (ii)}] If~$k$ belongs to a closed walk~$\cycle$ on~$H$,
then there are edges $(n+\alpha,k)$ and $(k,n+\beta)\in\crit(Z)$,
such that~$\alpha$ and~$\beta$ belong to a closed walk~$\tilde{\cycle}$ in~$H'$, with~$\ell(\cycle)=\ell(\cycle')$.
\end{itemize}
\end{lemma}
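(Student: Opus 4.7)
My plan is to translate walks back and forth between $\digr(A)$, $\digr(Z)$, and $\digr(B)$ using the bipartite structure of $Z$ together with the factor-rank identity $a_{i,j} = \bigoplus_{\alpha} v_{\alpha,i}\cdot w_{\alpha,j}$. Throughout, I will normalize so that $\lambda(Z)=1$ and $Z$ is strictly visualized, so that $\lambda(A)=\lambda(B)=1$ and critical edges of $A$, $B$, $Z$ are exactly the edges of weight $1$.

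The main technical observation is a \emph{lifting} of critical edges of $A$ to length-$2$ critical paths of $Z$: if $(i,j)$ is critical in $A$, then $a_{i,j}=1=\max_\alpha v_{\alpha,i}\,w_{\alpha,j}$, so some index $\alpha$ satisfies $v_{\alpha,i}=w_{\alpha,j}=1$, i.e.\ both $(i,n+\alpha)$ and $(n+\alpha,j)$ are critical edges of $Z$. Conversely, concatenation of two consecutive critical edges in $\crit(Z)$ is a critical edge of $\crit(Z^{\otimes 2})=\crit(Z)^2$, and by the block form~\eqref{e:z-a-b} and Lemma~\ref{l:CAk} this either sits inside $\crit(A)$ or inside $\crit(B)$. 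The same lifting applies from $B$ to $Z$ by symmetry: if $(\alpha,\beta)$ is critical in $B$, then $w_{\alpha,i}=v_{\beta,i}=1$ for some $i$, yielding critical edges $(n+\alpha,i)$ and $(i,n+\beta)$ in $Z$.

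Now for (ii), take a closed walk $C=(k=k_0,k_1,\ldots,k_\ell=k)$ in $H\subseteq \crit(A)$. For each edge $(k_{s-1},k_s)$ choose $\alpha_s$ with $v_{\alpha_s,k_{s-1}}=w_{\alpha_s,k_s}=1$. The expanded sequence
\[
k_0\to n+\alpha_1\to k_1\to n+\alpha_2\to k_2\to\cdots\to n+\alpha_\ell\to k_\ell
\]
is a closed walk of length $2\ell$ in $\crit(Z)$. Reading off the intermediate nodes gives a candidate walk $\tilde C=(\alpha_1,\alpha_2,\ldots,\alpha_\ell,\alpha_1)$ of length $\ell$ in $\digr(B)$; the edge $(\alpha_s,\alpha_{s+1})$ is critical because $b_{\alpha_s,\alpha_{s+1}}\ge w_{\alpha_s,k_s}\cdot v_{\alpha_{s+1},k_s}=1\cdot 1=1$, the second factor being $1$ by our choice of $\alpha_{s+1}$. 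Thus $\tilde C$ is a closed walk in $\crit(B)$. Setting $\beta=\alpha_1$ and $\alpha=\alpha_\ell$, the required edges $(k,n+\beta)$ and $(n+\alpha,k)$ are critical in $Z$ by construction. Finally, the closed walk in $\crit(Z)$ shows that $k$ and every $n+\alpha_s$ lie in the same component $G$ of $\crit(Z)$; since $\crit(Z)$ is bipartite with parts $\{1,\ldots,n\}$ and $\{n+1,\ldots,n+r\}$, the two components of $G^2$ are exactly $G\cap\{1,\ldots,n\}$ and $G\cap\{n+1,\ldots,n+r\}$, so $H=G\cap\{1,\ldots,n\}$ and $H'=G\cap\{n+1,\ldots,n+r\}$; hence $\tilde C$ lies in $H'$, and $\ell(\tilde C)=\ell(C)$.

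For (i), applying the construction of (ii) to a shortest cycle of $H$ (which is a closed walk of length $g(H)$ in $H$) produces a closed walk of length $g(H)$ in $H'$; since every closed walk contains a cycle of no greater length, $g(H')\le g(H)$. The lifting in the reverse direction from $B$ to $Z$, applied to a shortest cycle of $H'$, yields a closed walk of length $g(H')$ in $H$, giving $g(H)\le g(H')$; these two inequalities prove $g(H)=g(H')$. The only subtle step I anticipate is the component-tracking argument (verifying that the walk in $\digr(B)$ really lands in the \emph{related} component $H'$ and not in some other component of $\crit(B)$), which is handled cleanly by the bipartiteness remark above.
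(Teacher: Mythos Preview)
Your proof is correct and follows essentially the same approach as the paper: lift each critical edge of $H$ to a length-$2$ path in $\crit(Z)$ via the factorization $a_{i,j}=\bigoplus_\alpha v_{\alpha,i}w_{\alpha,j}$, concatenate to obtain a closed walk in $\crit(Z)$ of length $2\ell(C)$, and read off the intermediate nodes to get $\tilde C$ in $\crit(B)$; the reverse inequality for (i) is obtained by the symmetric lifting from $B$. Your component-tracking via the bipartite structure of $\crit(Z)$ is slightly more explicit than the paper's appeal to Theorem~\ref{t:graphpowers}, but the argument is the same in substance.
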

\begin{proof}
Take a closed walk~$C$ on $H$. Each edge of~$C$ results from a path of
$\crit(Z)$ of length $2$, and inserting these path in~$C$ we obtain a closed
walk of $\crit(Z)$ (see Figure~\ref{f:correspondence}, left). This
walk contains nodes from both $H$ and $H'$. In $Z^{\otimes 2}$ it
splits in two closed walks of $\crit(Z^{\otimes 2})$ of the same length (see
Figure~\ref{f:correspondence}, right). One of these closed walks is~$C$ and the other is a closed walk~$\tilde{C}$ of $H'$
(since $H$ and $H'$
are isolated in $\crit(Z^{\otimes 2})$).

This implies $g(H')\leq g(H)$, and the
reverse inequality follows by symmetry, hence part (i).\footnote{A similar argument shows that for any
strongly connected graph $\digr$, all components of $\digr^k$ have
the same girth.} It also follows
that each node of the original cycle in $H$ has neighbors belonging
to a closed walk in $H'$. Since each node of $H$ lies on a cycle, we have part (ii).
\end{proof}

\begin{figure}
\centering
\begin{tikzpicture}[>=latex,scale=0.8,very thick]
\begin{scope}[shift={(0,0)}]
\node[shape=circle,draw] (i1) at ( 90:3.0) {$i_1$};
\node[shape=circle,draw] (i2) at (162:3.0) {$i_2$};
\node[shape=circle,draw] (i3) at (234:3.0) {$i_3$};
\node[shape=circle,draw] (i4) at (306:3.0) {$i_4$};
\node[shape=circle,draw] (i5) at ( 18:3.0) {$i_5$};

\node[shape=circle,draw] (a1) at (126:1.5) {$\scriptstyle n+\alpha_1$};
\node[shape=circle,draw] (a2) at (198:1.5) {$\scriptstyle n+\alpha_2$};
\node[shape=circle,draw] (a3) at (270:1.5) {$\scriptstyle n+\alpha_3$};
\node[shape=circle,draw] (a4) at (342:1.5) {$\scriptstyle n+\alpha_4$};
\node[shape=circle,draw] (a5) at ( 54:1.5) {$\scriptstyle n+\alpha_5$};

\draw[->] (i1) to  (a1);
\draw[->] (a1) to  (i2);
\draw[->] (i2) to  (a2);
\draw[->] (a2) to  (i3);
\draw[->] (i3) to  (a3);
\draw[->] (a3) to  (i4);
\draw[->] (i4) to  (a4);
\draw[->] (a4) to  (i5);
\draw[->] (i5) to  (a5);
\draw[->] (a5) to  (i1);

\node at (200:3.2) {$\digr(Z)$};
\end{scope}

\draw[|->] ( 18:4) -- ( 18:6);
\draw[|->] (-18:4) -- (-18:6);

\begin{scope}[shift={(18:8.5)}]
\node[shape=circle,draw] (i1) at ( 90:1.5) {$i_1$};
\node[shape=circle,draw] (i2) at (162:1.5) {$i_2$};
\node[shape=circle,draw] (i3) at (234:1.5) {$i_3$};
\node[shape=circle,draw] (i4) at (306:1.5) {$i_4$};
\node[shape=circle,draw] (i5) at ( 18:1.5) {$i_5$};

\draw[->] (i1) to (i2);
\draw[->] (i2) to (i3);
\draw[->] (i3) to (i4);
\draw[->] (i4) to (i5);
\draw[->] (i5) to (i1);

\node at (-20:2.4) {$\digr(A)$};
\end{scope}

\begin{scope}[shift={(-18:8.5)}]
\node[shape=circle,draw] (a1) at (126:1.5) {$\alpha_1$};
\node[shape=circle,draw] (a2) at (198:1.5) {$\alpha_2$};
\node[shape=circle,draw] (a3) at (270:1.5) {$\alpha_3$};
\node[shape=circle,draw] (a4) at (342:1.5) {$\alpha_4$};
\node[shape=circle,draw] (a5) at ( 54:1.5) {$\alpha_5$};

\draw[->] (a1) to (a2);
\draw[->] (a2) to (a3);
\draw[->] (a3) to (a4);
\draw[->] (a4) to (a5);
\draw[->] (a5) to (a1);

\node at (20:2.4) {$\digr(B)$};
\end{scope}
\end{tikzpicture}
\caption{Correspondence between closed walks of $\digr(Z)$, $\digr(A)$ and $\digr(B)$.}
\label{f:correspondence}
\end{figure}

\if{
Every critical closed walk in~$\digr(Z)$ of length~$\ell$
gives rise to both a closed walk in~$\digr(A)$ of length~$\ell/2$ and a
closed walk in~$\digr(B)$ of length~$\ell/2$. This correspondence, depicted
on Figure~\ref{f:correspondence}, is surjective, and it can be observed that
if one of the closed walks consists of critical edges only, then so do
In particular, the cyclicities of~$\digr(A)$ and~$\digr(B)$ are equal, i.e.
also $\digr(B)$ has cyclicity~$d$.
The correspondence also maps critical closed walks to critical closed walks.
That is, $k$ is also a critical node in~$\digr(Z)$ and it has both a critical
incoming neighbor $n+\alpha$ and a critical outgoing neighbor $n+\beta$ in
$\crit(Z)$.
Furthermore, both~$\alpha$ and~$\beta$ are critical in~$\digr(B)$ and they are
contained in the same component~$H'$ of $\crit(B)$.
The correspondence also yields the equality $g(H') = g(H)$ of the girths.
}\fi

Define
\begin{equation}
\label{e:hdef}
h=\min(\lvert H\rvert,\lvert H'\rvert).
\end{equation}
We are ready for the proof of Main Theorem~\ref{t:rank}.

\begin{proof}[Proof of Main Theorem~\ref{t:rank}]
Let $Z$ and $B$ be the matrices defined in~\eqref{e:zdef} and~\eqref{e:bdef} .
Let  $k$ be an index in $H$ (belonging to $\{1,\ldots,n\}$).
By Lemma~\ref{l:related} part (ii), there is an edge of $\crit(Z)$
connecting it to some node $n+\beta$, for $\beta\in\{1,\ldots,r\}$, which belongs to $H'$.
For each bound of Main~Theorem~\ref{t:mainres}, an application
 Lemma~\ref{l:transfer} yields a
version of the corresponding bound of Main Theorem~\ref{t:rank} on $T_k$,
where $d$ is the cyclicity of~$B$, and we have $g(H')$ instead of $g(H)$ and
$\lvert H'\rvert$ instead of $h$.

However, $g(H')=g(H)$ and $\digr(B)$ has the same cyclicity as~$\digr(A)$ (when $A$ and hence also $B$ are
irreducible), so it only remains
to explain why we have $h$ (and not $\lvert H'\rvert$), in the factor rank versions of the
bounds of Dulmage and Mendelsohn, and Kim. The following argument accounts for both cases (set $d=1$ for
Dulmage and Mendelsohn's bound).

First, let $k$ belong to a cycle~$\cycle$ with length~$g(H)$.
By Lemma~\ref{l:related} part~(i), $\beta$ belongs to a closed walk~$\tilde{C}$ with length~$g(H)=g(H')$ in~$H'$,
so we can apply Lemma~\ref{l:kim2} to~$\beta$ and get~$T_\beta(B)\le
g(H)(\lfloor r/d \rfloor-1) + (r \bmod d)$ (for the
bound of Kim), or apply Lemma~\ref{lem:nacht} and get~$T_\beta(B)\le g(H)(r-1)$ (for the bound of Dulmage and Mendelsohn).
By Lemma~\ref{l:transfer}, we get
$$T_k(A) \le g(H)(\lfloor r/d \rfloor-1) + (r \bmod d)+1.$$

Second, if $k$ does not belong to such a cycle,
we can apply Lemma~\ref{lem:trans:walk},
because~$k$ is connected to~$\cycle$ by a path on~$H$ of length at
most~$\lvert H\rvert-g(H)$. Hence we obtain
$$T_k(A) \le g(H)(\lfloor r/d \rfloor-2) + (r \bmod d)+\lvert H\rvert+1$$
and the factor rank versions of Kim's and Dulmage-Mendelsohn's bounds. The proof is complete.
\end{proof}

\section{On the Precision of the Bounds}
\label{s:precision}
Since the bounds of Main Theorem~\ref{t:mainres} are extensions of the bounds on Boolean matrices
and the latter are known to be exact (see for instance~\cite{LS-93} and the references therein), so are the bounds of Main Theorem~\ref{t:mainres}. However,
the max-algebraic case is richer, and some natural questions arise. A general question is
when these bounds are attained.  To begin with, can these bounds be
attained by the matrices whose critical graph does not attain the corresponding ``Boolean'' bound, or can the bounds
be attained when not all the nodes are critical.

The easiest way to produce max-plus examples from Boolean ones is to
use the semigroup morphism~$\phi_0:\Rpnn\rightarrow\bool^{n\times
n}$ that maps~$A$ to its  {\em pattern} $B=\phi_0(A)$ such that
$b_{i,j}=0$ if and only if $a_{i,j}=0$ and $b_{i,j}=1$ otherwise. Since it is a
morphism, we have:

\begin{lemma}
\label{l:pattern}
Let $A,B\in\Rpnn$ have the same pattern and let $A$ be Boolean.
Then $T_k(A)\leq T_k(B)$ for all $k=1,\ldots,n$.
\end{lemma}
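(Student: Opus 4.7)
The plan is to leverage the fact that $\phi_0$ is a morphism for max-algebraic matrix multiplication. Since $A$ is Boolean and has the same pattern as $B$, we have $\phi_0(B) = A$, and the morphism property gives $\phi_0(B^{\otimes t}) = A^{\otimes t}$ for every $t \geq 1$. Equivalently, $a^{(t)}_{k,i} = 1$ if and only if $b^{(t)}_{k,i} > 0$, and $a^{(t)}_{k,i} = 0$ if and only if $b^{(t)}_{k,i} = 0$.

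Next, I would fix $k$ and a period $p > 0$ realizing $T_k(B)$, so that $b^{(t+p)}_{k,i} = \lambda(B)^p \cdot b^{(t)}_{k,i}$ for all $i \in N$ and all $t \geq T_k(B)$. For $T_k(B)$ to exist, the row $k$ of $B$ must reach a nonempty closed walk, hence $\lambda(B) > 0$, and multiplication by $\lambda(B)^p$ preserves the property of being zero or nonzero. Applying $\phi_0$ to the periodicity identity therefore yields $a^{(t+p)}_{k,i} = a^{(t)}_{k,i} = \lambda(A)^p \cdot a^{(t)}_{k,i}$ for every $i$ and every $t \geq T_k(B)$, since $\lambda(A) = 1$. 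Hence $p$ is an eventual period for row $k$ of $A$ starting from time $T_k(B)$, giving $T_k(A) \leq T_k(B)$.

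The only subtle point is to check that $\phi_0$ is indeed a semigroup morphism from $(\Rpnn, \otimes)$ to $(\bool^{n\times n}, \otimes)$: this follows from the fact that a max of products of nonnegative numbers is positive iff at least one of the products is positive, which in turn happens iff both factors are positive. I expect no real obstacle here; the proof is essentially bookkeeping with the morphism, together with the harmless observation that $\lambda(B) > 0$ whenever the transient of a row is defined.
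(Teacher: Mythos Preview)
Your proof is correct and follows exactly the approach the paper sketches: the lemma is stated immediately after the observation that $\phi_0$ is a semigroup morphism, with the words ``Since it is a morphism, we have:'' serving as the entire argument. Your write-up simply fills in the routine details of applying $\phi_0$ to the periodicity identity for row~$k$ of~$B$ (using $\lambda(B)>0$ and $\lambda(A)=1$), which is precisely what the paper leaves implicit.
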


To illustrate the use of Lemma~\ref{l:pattern}, consider an
example from the work of Schwarz~\cite{Sch-70}, attaining the corresponding bound.
It is a strongly connected graph consisting
of two cycles, of lengths $6$ and $4$, displayed in the left part of Figure~\ref{f:schwarz}.
The greatest transient of a row of the associated Boolean matrix is $T_4(A)=11$, which is equal to
Schwarz's bound of Theorem~\ref{t:allbounds} with $n=7$ and $d=2$.
Now, let~$B$ be a matrix with pattern~$A$ such that node $4$ is critical.
On one hand, Main Theorem~\ref{t:mainres} ensures that $T_4(B)\le 11$. On the other hand, by Lemma~\ref{l:pattern}, we have~$T_4(B)\ge T_4(A)=11$.
Thus Schwarz's bound is attained by~$T_4(B)$. In particular,
consider any nonnegative matrix $B$ where all entries of the bigger cycle are equal to $1$,
and the two remaining nonzero entries are less than or equal to $1$. The associated digraphs of $A$ and $B$ are
displayed on Figure~\ref{f:schwarz}. It can be checked by direct computation that $T_4(B)=11$.
More examples of this kind can be constructed using the work of Shao and Li~\cite{SL-87}.
Observe that not all the nodes of the graph on the right-hand side of Figure~\ref{f:schwarz} are critical,
but it does attain the greatest possible transient of critical rows, because
node~$4$ is critical.

\begin{figure}
\centering
\begin{tabular}{cc}

\begin{tikzpicture}[shorten >=1pt,->,scale=2,>=latex']

\tikzstyle{vertex}=[circle,draw,minimum size=17pt,inner sep=1pt,thick]

\foreach \name/\angle/\text in {1/0/1, 2/300/2,3/240/3, 4/180/4, 5/120/5, 6/60/6}
\node[vertex,xshift=0cm,yshift=0cm] (\name) at (\angle:1cm) {$\text$};

\draw node[vertex,xshift=0cm,yshift=0cm] (7)  {$7$};


\draw [->] (1) to node[right] {$1$}(2);

\draw [->] (2) to node[below] {$1$}(3);

\draw [->] (3) to node[left] {$1$} (4);
\draw [->] (4) to node[above] {$1$}(5);
\draw [->] (5) to node[above] {$1$}(6);
\draw [->] (6) to node[above] {$1$}(1);
\draw [->] (3) to node[above] {$1$}(7);
\draw [->] (7) to node[above] {$1$}(1);

\end{tikzpicture}
&
\begin{tikzpicture}[shorten >=1pt,->,scale=2,>=latex']

\tikzstyle{vertex}=[circle,draw,minimum size=17pt,inner sep=1pt,thick]

\foreach \name/\angle/\text in {1/0/1, 2/300/2,3/240/3, 4/180/4, 5/120/5, 6/60/6}
\node[vertex,xshift=0cm,yshift=0cm] (\name) at (\angle:1cm) {$\text$};

\draw node[vertex,xshift=0cm,yshift=0cm] (7)  {$7$};


\draw [->] (1) to node[right] {$1$}(2);

\draw [->] (2) to node[below] {$1$}(3);

\draw [->] (3) to node[left] {$1$} (4);
\draw [->] (4) to node[above] {$1$}(5);
\draw [->] (5) to node[above] {$1$}(6);
\draw [->] (6) to node[above] {$1$}(1);
\draw [->] (3) to node[left] {$0.5$}(7);
\draw [->] (7) to node[above] {$0.5$}(1);

\end{tikzpicture}
\end{tabular}
\caption{Schwarz's example (left) and its max-algebraic version
(right)\label{f:schwarz}}
\end{figure}
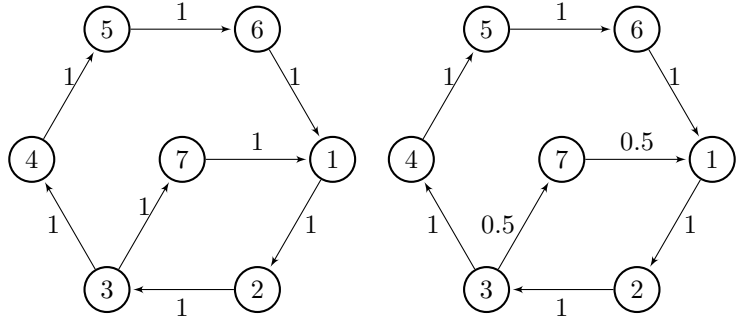

Another way to produce examples is to use the map
$\phi_1:\Rpnn\rightarrow\bool^{n\times n}$ that maps~$A$ to
$B=\phi_1(A)$ such that $b_{i,j}=1$ if and only if $a_{i,j}=1$ and $b_{i,j}=0$
otherwise. It is not a morphism on $\Rpnn$ but it is not difficult
to check (generalizing Lemma~\ref{l:CAk}) that it is a morphism on
the following semigroups of~$\Rpnn$, defined for each subset~$X$
of~$N$:
$$\mathcal{S}_{X}=\left\{ A\in\Rpnn\mid \lambda(A)=1\,,\, A\textnormal{ is strictly
visualized}\,,\, N_c(A)=X\right\}$$

For any matrix~$A$ in~$\mathcal{S}_{X}$ the critical edges have weight~$1$, so that $\crit(A)=\crit(\phi_1(A))$.
Obviously, if~$\lambda(A)=1$ and~$A$ is strictly visualized, $A\in \mathcal{S}_{N_c(A)}$, so that any such matrix satisfies
$T_k(A)\ge T_k\left(\phi_1(A)\right)$. It extends to general matrices in the following way.

For $A\in\Rpnn$, let $A^C$ be the {\em critical matrix\/} of $A$, with
entries
\begin{equation*}
a^C_{ij}=
\begin{cases}
1, &\text{if $(i,j)\in E_c(A)$},\\
0, &\text{otherwise}.
\end{cases}
\end{equation*}



\begin{lemma}[cf.~{\cite[Corollary 2.9]{BSST}}]
\label{l:bsst}
$T_k(A^C)\leq T_k(A)$ for all $k\in N_c(A)$.
\end{lemma}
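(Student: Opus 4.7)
The plan is to exploit the standing assumption (Remark~\ref{r:visualize}) that $A$ is strictly visualized with $\lambda(A)=1$, so that every critical edge of $\digr(A)$ has weight exactly~$1$ while every non-critical edge has weight strictly less than~$1$. Consequently a walk $W$ in $\digr(A)$ satisfies $p(W)=1$ if and only if every edge of $W$ lies in $\crit(A)$. Via the optimal-walk interpretation of max-algebraic powers, this translates into the pointwise equivalence
\[
a^{(t)}_{k,j}=1 \iff a^{C,(t)}_{k,j}=1
\]
for every $k,j\in N$ and every $t\geq 0$, since the right-hand side is the Boolean statement that there exists a walk of length~$t$ from $k$ to $j$ in $\crit(A)$. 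This equivalence is the only substantive ingredient of the proof.

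Before reading off periodicity, I would verify that $\lambda(A^C)=1$: the entries of $A^C$ lie in $\{0,1\}$, forcing $\lambda(A^C)\leq 1$, while a critical cycle of $A$ through~$k$ (which exists since $k\in N_c(A)$) survives in $A^C$ as a cycle whose every edge has weight~$1$, giving $\lambda(A^C)\geq 1$. Hence the defining condition of the $k$th row transient of $A^C$ in Definition~\ref{d:rowcolumn} simplifies to $a^{C,(t+p)}_{k,j}=a^{C,(t)}_{k,j}$ for all~$j$, with no extra scaling factor.

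To finish, I would pick any eventual period $p>0$ of the $k$th row of $A$. For every $t\geq T_k(A)$ and every $j\in N$ one has $a^{(t+p)}_{k,j}=\lambda(A)^p\cdot a^{(t)}_{k,j}=a^{(t)}_{k,j}$, so in particular $a^{(t+p)}_{k,j}=1$ if and only if $a^{(t)}_{k,j}=1$. Applying the displayed equivalence to both sides yields $a^{C,(t+p)}_{k,j}=a^{C,(t)}_{k,j}$ for all $j\in N$ and all $t\geq T_k(A)$, which is exactly what is required to conclude $T_k(A^C)\leq T_k(A)$.

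I do not anticipate any real obstacle here: the lemma simply says that the Boolean sequence $\bigl(a^{C,(t)}_{k,j}\bigr)_t$ is a coarsening of the sequence $\bigl(a^{(t)}_{k,j}\bigr)_t$ under the map sending~$1$ to~$1$ and every smaller value to~$0$, and such a coarsening cannot lengthen the pre-periodic part. The only place where one has to be careful is in invoking strict visualization, without which the equivalence $a^{(t)}_{k,j}=1\iff a^{C,(t)}_{k,j}=1$ may fail (a non-critical edge of weight~$1$ could inflate the left-hand side), so I would make that assumption explicit at the start.
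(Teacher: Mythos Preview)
Your proof is correct and follows essentially the same line as the paper. The paper does not give a standalone proof of the lemma; instead, just before stating it, the paper observes that the map $\phi_1$ sending an entry to $1$ if it equals $1$ and to $0$ otherwise is a semigroup morphism on each $\mathcal{S}_X$, so that for strictly visualized $A$ with $\lambda(A)=1$ one has $\phi_1(A)=A^C$ and hence $T_k(A)\ge T_k(\phi_1(A))=T_k(A^C)$, with the general case then obtained by scaling (implicitly via Remark~\ref{r:visualize}) or by reference to~\cite{BSST}. Your equivalence $a^{(t)}_{k,j}=1\iff a^{C,(t)}_{k,j}=1$ is precisely an explicit verification of this morphism property, and your reduction to the strictly visualized case is the same reduction the paper relies on throughout.
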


Lemma~\ref{l:bsst} shows that if an unweighted digraph $\crit$ on $n$ nodes attains a given bound
for some $T_k$, then any
$n\times n$ matrix with entries in $[0,1]$, whose critical graph is $\crit$, attains it as well (for the same $k$).
In the following example, the first matrix attains Wielandt's bound
($T_5(A)=17$) second matrix attains Dulmage and Mendelsohn's bound  ($T_4(B)=14$), since their critical
graphs attain the corresponding Boolean bounds.

\begin{equation*}
A=
\begin{pmatrix}
0 & 1 & 0.1 & 0.2 & 0\\
0 & 0 & 1 & 0 & 0.3\\
0 & 0.4 & 0 & 1 & 0\\
1 & 0 & 0.5 & 0 & 1\\
1 & 0 & 0 & 0 & 0.9\\\end{pmatrix},\quad
B=
\begin{pmatrix}
0 & 1 & 0.4 & 0.5 & 0\\
0.1 & 0 & 1 & 0 & 0\\
1 & 0 & 0.2 & 1 & 0.7\\
0 & 0.9 & 0 & 0 & 1\\
1 & 0.2 & 0.6 & 0.7 & 0\\\end{pmatrix}.
\end{equation*}

\if{ Thanks to Lemma~\ref{l:bsst} all examples attaining the Boolean
bounds can be transformed into matrices with positive entries only,
attainin For example, it proves that there are matrices with
positive entries of any dimension reaching the bound of Wielandt.
}\fi


\if{
Finally, there is a third way to modify matrices that reaches
the bounds so that they still do:
\begin{lemma}\label{l:AddingEdges}
Let~$B$ be a visualized matrix with~$\lambda(B)=1$ and $k$ a
critical node. Let~$C$ be a matrix whose entries are strictly less
than $\min \{b^{(t)}_{k,j}>0\mid t\ge 1, j\in\{1,\cdots n\}\}$
and~$A=B\oplus C$

Then $T_k(A)\ge T_k(B)$.
\end{lemma}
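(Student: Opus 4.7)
The plan is to compare the $k$-th rows of $A^{\otimes t}$ and $B^{\otimes t}$ directly, showing that above a certain threshold they coincide, and then propagate the eventual periodicity of the $A$-row back to the $B$-row.

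First I would set $\delta := \min\bigl\{b^{(t)}_{k,j}>0 \,\big|\, t\geq 1,\ j\in N\bigr\}$ and $\epsilon := \max_{i,j} c_{i,j}$. Since $k$ is critical in $B$, the $k$-th row sequence is eventually periodic, hence takes only finitely many distinct values, so $\delta$ is well-defined and strictly positive; the hypothesis then gives $\epsilon<\delta$. Because $B$ is visualized with $\lambda(B)=1$, every entry of every $B^{\otimes t}$ lies in $[0,1]$, so $\delta\leq 1$ and $\epsilon<1$. It is also worth noting that $\lambda(A)=1$ and $k$ remains critical in $\digr(A)$: since $A\geq B$ entrywise any critical cycle of $B$ through $k$ survives in $\digr(A)$, while the bound $\epsilon<1$ on the new entries together with $b_{i,j}\leq 1$ prevents any cycle of $\digr(A)$ from exceeding weight-mean $1$.

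Next, I would use the optimal walk interpretation. Since $A=B\oplus C$, a walk of length $t$ in $\digr(A)$ picks, at each edge, either the $B$-weight or the $C$-weight, and $a^{(t)}_{k,j}$ is the maximum weight of such a walk from $k$ to $j$. Walks using only $B$-edges contribute at most $b^{(t)}_{k,j}$, and this is attained; walks using at least one $C$-edge have weight at most $\epsilon\cdot 1^{t-1}=\epsilon$, since the other factors are bounded by $1$. This yields the dichotomy driving the argument: if $b^{(t)}_{k,j}>0$ then $b^{(t)}_{k,j}\geq\delta>\epsilon$ and $a^{(t)}_{k,j}=b^{(t)}_{k,j}$; if $b^{(t)}_{k,j}=0$ then $a^{(t)}_{k,j}\leq\epsilon<\delta$.

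Finally, to obtain $T_k(A)\geq T_k(B)$, let $T:=T_k(A)$ with associated period $p$. For $t\geq T$ and any $j\in N$, the equality $a^{(t+p)}_{k,j}=a^{(t)}_{k,j}$ combined with the dichotomy forces $b^{(t)}_{k,j}$ and $b^{(t+p)}_{k,j}$ to be simultaneously zero or simultaneously positive and equal: a value $\geq\delta$ on one side cannot be matched by the $C$-regime (bounded by $\epsilon$) on the other. Hence $B^{\otimes(t+p)}_{k\cdot}=B^{\otimes t}_{k\cdot}$ for all $t\geq T$, giving $T_k(B)\leq T=T_k(A)$. No step is a genuine obstacle; the only delicate point is the well-definedness and positivity of the threshold $\delta$, which rests precisely on the criticality of $k$ in $B$.
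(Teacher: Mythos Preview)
Your proof is correct and follows essentially the same approach as the paper's: both rest on the dichotomy that $a^{(t)}_{k,j}=b^{(t)}_{k,j}$ whenever $b^{(t)}_{k,j}>0$, and $a^{(t)}_{k,j}\leq\epsilon<\delta$ otherwise. The paper then fixes a \emph{common} eventual period~$p$ of the two row sequences and compares, for each residue class~$t_0$ and each~$j$, the transients of the eventually constant sequences $b^{(pt+t_0)}_{k,j}$ and $a^{(pt+t_0)}_{k,j}$; you instead take only the period~$p$ of the $A$-row at time $T_k(A)$ and read off directly that the $B$-row satisfies the same recurrence from that time on. Your route is slightly more direct and avoids invoking the eventual periodicity of the $B$-row in the final step (you used it only to guarantee that $\delta$ is a genuine minimum), but the two arguments are minor rearrangements of the same idea.
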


\begin{proof}[To detail using optimal walk interpretation]
Since $B$ is visualized and~$\lambda(B)=1$, the entries of~$B$ are
at most~$1$, and so are the entries of~$C$ and finally of~$A$.

The key point is to realize that $C$ is built in such a way that for
any~$t$ and~$j$, if $b^{(t)}_{k,j}>0$, then
$a^{(t)}_{k,j}=b^{(t)}_{k,j}$.

Now, we take a period~$p$ of both~$a^{(t)}_{k,j}$
and~$b^{(t)}_{k,j}$ and consider the sequence $b^{(pt+t_0)}_{k,j}$.

This sequence is eventually constant. If the limit is zero, then the
limit of~$a^{(pt+t_0)}_{k,j}$ is strictly less than all the
$b^{(t)}_{k,j}$s, and it can be reached only when
$b^{(pt+t_0)}_{k,j}$ has reached~$0$, so the transient of
$a^{(pt+t_0)}_{k,j}$ is at least the transient of
$b^{(pt+t_0)}_{k,j}$.

If the limit of $b^{(pt+t_0)}_{k,j}$ is positive, then it is also
the limit of $a^{(pt+t_0)}_{k,j}$ and the same is true.

Since, this holds for any~$t_0$ and any~$j$, the Lemma is proved.
\end{proof}

}\fi


We conclude that the two lemmas provide us with some classes of
matrices attaining the bounds of Main Theorem~\ref{t:mainres}.
However, this characterization is far from being complete and leaves
vast possibilities of research.

\if{
\begin{equation}
\label{sch-ex}
A=
\begin{pmatrix}

\end
\end{equation}
}\fi

\if{
Let us also recall the following observation
on the critical part of max-algebraic powers in the periodic regime, in
order to complete the description.

\begin{proposition}
\label{p:multiple }
Let $k$ be an index of $N_c(A)$ and let $c$ be the cyclicity of the critical
component to which it belongs. If $t$ is a multiple of $c$, then
\begin{itemize}
\item[{\rm (i)}]  $A^{\otimes t}_{k\cdot}\leq (A^{\otimes c})^*_{k\cdot}$;
\item[{\rm (ii)}]  $A^{\otimes t}_{k\cdot}=(A^{\otimes c})^*_{k\cdot}$ when $t\geq T_k(A)$.
\end{itemize}
\end{proposition}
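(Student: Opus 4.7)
The plan is to reduce everything to the matrix $B:=A^{\otimes c}$ and exploit that $k$ lies in a \emph{primitive} critical component of $B$. Writing $t=mc$, we have $A^{\otimes t}=B^{\otimes m}$, so $a^{(t)}_{k,j}=b^{(m)}_{k,j}$ for every $j$. Part (i) is then immediate from the definition of the Kleene star:
\[
(B^*)_{k,j} = \bigoplus_{s\geq 0} b^{(s)}_{k,j} \geq b^{(m)}_{k,j} = a^{(t)}_{k,j}.
\]

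For part (ii), the key structural input is obtained by combining Lemma~\ref{l:CAk} (which gives $\crit(B)=(\crit(A))^c$) with Theorem~\ref{t:graphpowers}: the critical component $H$ of $k$ in $\crit(A)$, of cyclicity $c$, splits in $(\crit(A))^c$ into $c$ isolated components each of cyclicity $1$. Hence $k$ lies in a \emph{primitive} strongly connected component of $\crit(B)$, so there exists $m_0$ with $b^{(m')}_{k,k}=1$ for every $m'\geq m_0$.

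From Remark~\ref{r:genperiod}, the least eventual period of the $k$th row of $A$ is exactly $c$. Therefore, once $mc\geq T_k(A)$, we have $a^{(mc+lc)}_{k,j}=a^{(mc)}_{k,j}$ for all $l\geq 0$, i.e. $b^{(m')}_{k,j}=b^{(m)}_{k,j}$ for all $m'\geq m$. Now fix any $s\geq 0$ and pick $m'\geq\max(m,\,m_0+s)$. Then $m'-s\geq m_0$, so $b^{(m'-s)}_{k,k}=1$, and the sub-multiplicative walk inequality yields
\[
b^{(m)}_{k,j} \;=\; b^{(m')}_{k,j} \;\geq\; b^{(m'-s)}_{k,k}\cdot b^{(s)}_{k,j} \;=\; b^{(s)}_{k,j}.
\]
Taking the supremum over $s\geq 0$ gives $(B^*)_{k,j}\leq b^{(m)}_{k,j}$, and combining with (i) produces the equality claimed in (ii).

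The main subtlety I expect is the interplay between two thresholds that do not dominate one another: the row transient $T_k(A)/c$ (governing when $B$'s $k$th row is eventually constant) and the primitivity index $m_0$ of the primitive critical component of $k$ in $\crit(B)$ (governing when $b^{(m')}_{k,k}=1$). The ``let $m'$ be as large as we need it'' device above side-steps this by using periodicity to move $b^{(m')}_{k,j}$ back to $b^{(m)}_{k,j}$ while simultaneously making $m'-s\geq m_0$, so that no direct inequality between $T_k(A)/c$ and $m_0$ is needed.
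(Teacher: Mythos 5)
First, a peculiarity you could not have known: this proposition is stated in the paper only inside a commented-out block of the source (it is introduced there as an observation to be ``recalled''), so the paper contains \emph{no} proof of it, and there is nothing to compare your argument against line by line. Judged on its own, your proof is correct, and it is assembled from the paper's own toolkit in the natural way: part (i) is the trivial Kleene-star inequality; for part (ii), combining Lemma~\ref{l:CAk}(i) with Theorem~\ref{t:graphpowers} does place $k$ in a primitive component of $\crit(A^{\otimes c})$ (using, tacitly, that distinct components of $\crit(A)$ are isolated from one another, since every critical edge lies on a critical cycle, so that the restriction of $(\crit(A))^c$ to the nodes of $H$ is exactly $H^c$), and your device of choosing $m'\ge\max(m,\,m_0+s)$ correctly decouples the transient threshold from the primitivity threshold. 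This is the same mechanism as in the paper's proof of Lemma~\ref{lem:nacht}, except that there $b_{k,k}=1$ holds immediately, whereas here it holds only eventually --- which is precisely the extra work your $m_0$ handles. Two steps you use silently deserve a word. First, the equality $b^{(m'-s)}_{k,k}=1$ requires the standing normalization $\lambda(A)=1$ with $A$ visualized (Remark~\ref{r:visualize}), so that $A^{\otimes c}$ is visualized as well by Lemma~\ref{l:CAk}(ii), critical edges have weight exactly~$1$, and all entries are at most~$1$; note also that without $\lambda(A)=1$ the star $(A^{\otimes c})^*$ in the statement need not even be finite, so this normalization is implicit in the statement itself. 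Second, Definition~\ref{d:rowcolumn} defines $T_k(A)$ via \emph{some} eventual period $p$, while you apply periodicity with the specific period $c$ starting already at $T_k(A)$; this needs the standard fact that the onset of periodicity is the same for every eventual period (the paper asserts this only in the Boolean setting), which follows from the usual three-step argument: for $t\ge T_k(A)$ and $l$ large enough, $a^{(t+c)}_{k,j}=a^{(t+c+lp)}_{k,j}=a^{(t+lp)}_{k,j}=a^{(t)}_{k,j}$. Neither point is a genuine gap; with them spelled out, your proof is complete.
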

}\fi



\begin{thebibliography}{10}

\bibitem{ABG}
M.~Akian, R.~Bapat, and S.~Gaubert.
\newblock Max-plus algebras.
\newblock Chapter 25 in {\em Handbook of Linear Algebra},
L. Hogben et al.\ (Eds.), vol.\ 39 of Discrete Mathematics and Its
Applications, Chapman and Hall, 2007. 

\bibitem{AGW}
M.~Akian, S.~Gaubert, and C.~Walsh.
Discrete max-plus spectral theory.
In: G. L. Litvinov and V.P. Maslov (Eds.), {\em Idempotent Mathematics and Mathematical Physics},
vol. 377 of Cont. Math., 2005, pages 53--77.

\bibitem{BG-01}
A. Bouillard and B. Gaujal.
Coupling time of a (max,plus) matrix.
{\em Proceedings of the Workshop on Max-Plus Algebra at the 1st IFAC Symposium
on System Structure and Control}.
Elsevier, Amsterdam, 2001,
pages 53--78.

\bibitem{BR}
R.~A.~Brualdi and H.~J.~Ryser.
\newblock {\em Combinatorial Matrix Theory}.
\newblock Cambridge University Press, 1991.

\bibitem{But:10}
P.~Butkovi\v{c}.
\newblock {\em Max-Linear Systems: {T}heory and {A}lgorithms}.
\newblock Springer, 2010.

\bibitem{BSST}
P.~Butkovi\v{c}, H.~Schneider, S.~Sergeev, and B.-S.~Tam. Two cores of a nonnegative matrix.
{\em Linear Algebra Appl.}, published online, 2013.

\bibitem{CBFN-12}
B. Charron-Bost, M. F{\"u}gger, and T. Nowak.
New transience bounds for long walks, 2012.
E-print \arxiv{1209.3342}.

\bibitem{CDQV-83}
G.~Cohen, D.~Dubois, J. P. Quadrat, and M.~Viot.
\newblock Analyse du comportement p{\'{e}}riodique de syst{\`{e}}mes de
  production par la th{\'{e}}orie des dio{\"{\i}}des.
\newblock INRIA, Rapport de Recherche no.\ 191, 1983.


\bibitem{Den-77}
E. V. Denardo.
\newblock Periods of connected networks and powers of nonnegative matrices.
\newblock {\em Mathematics of Operations Research}, 2(1):20--24, 1977.

\bibitem{DM-62}
A.~L.~Dulmage and N.~S.~Mendelsohn.
\newblock Gaps in the exponent set of primitive matrices.
\newblock {\em Illinois Journal of Mathematics}, 8(4):642--656, 1964.

\bibitem{FP}
M.~Fiedler and V.~Pt\'{a}k.
\newblock Diagonally dominant matrices.
\newblock {\em Czechoslovak Mathematical Journal}, 17(3):420--433, 1967.

\bibitem{GKP-95}
D. A. Gregory, S. J. Kirkland, and N. J. Pullman.
\newblock A bound on the exponent of a primitive matrix using {B}oolean rank.
\newblock {\em Linear Algebra and Its Applications}, 217:101--116, 1995.

\bibitem{HA-99}
M.~Hartmann and C.~Arguelles.
\newblock Transience bounds for long walks.
\newblock {\em Mathematics of Operations Research}, 24(2):414--439, 1999.

\bibitem{Kim-79} K. H. Kim. An extension of the Dulmage-Mendelsohn
theorem. {\em Linear Algebra and Its Applications}, 27:187-197,
1979.

\bibitem{LS-93} Q. Li and J.Y. Shao. The index set problem for Boolean (or
nonnegative) matrices. {\em Discrete Mathematics}, 123(1--3):75--92, 1993.

\bibitem{Nacht}
K.~Nachtigall.
\newblock Powers of matrices over an extremal algebra with applications to
  periodic graphs.
\newblock {\em Mathematical Methods of Operations Research}, 46:87--102, 1997.

\bibitem{Sch-02}
H.~Schneider
\newblock{Wielandt's proof of the exponent inequality for primitive nonnegative matrices}.
\newblock {\em Linear Algebra and Its Applications}, 353:5--10, 2002.

\bibitem{Sch-70}
\v{S}. Schwarz.
\newblock On a sharp estimation in the theory of
binary relations on a finite set.
\newblock {\em Czechoslovak Mathematical Journal},
20:703--714, 1970.

\bibitem{SL-87}
J.Y.Shao and Q.Li
\newblock{On the index of convergence of an irreducible Boolean matrix.}
\newblock {\em Linear Algebra and Its Applications}, 97:185--210, 1987,

\bibitem{Ser-09}
S.~Sergeev.
\newblock Max algebraic powers of irreducible matrices in the periodic regime:
  An application of cyclic classes.
\newblock {\em Linear Algebra and Its Applications}, 431(6):1325--1339, 2009.

\bibitem{Ser-13}
S.~Sergeev.
\newblock Fiedler-Pt\'ak scaling in max algebra.
\newblock{\em Linear Algebra and Its Applications}, 439 (4):822--829, 2013.


\bibitem{SerSch} S.~Sergeev and H.~Schneider. CSR expansions of
matrix powers in max algebra, {\em Transactions of the AMS\/}~364:5969--5994, 2012.


\bibitem{SSB}
S.~Sergeev, H.~Schneider, and P.~Butkovi\v{c}.
\newblock On visualization scaling, subeigenvectors and {Kleene} stars in max
  algebra.
\newblock {\em Linear Algebra and Its Applications}, 431(12):2395--2406, 2009.




\bibitem{SyK-03}
G. Soto y Koelemeijer.
{\em On the Behaviour of Classes of Min-Max-Plus Systems}.
Ph.D. thesis, TU Delft, 2003.


\bibitem{Wie-50} H. Wielandt. Unzerlegbare, nicht negative Matrizen.
 {\em Mathematische Zeitschrift}~ 52(1):642--645, 1950.

\end{thebibliography}

\end{document}